\documentclass[11pt]{amsart}
\usepackage{amssymb}
\usepackage{amscd}
\usepackage{amsmath}
\usepackage{comment}
\usepackage{color}
\usepackage[dvipsnames]{xcolor}
\usepackage{url}
\usepackage{hyperref}

\newtheorem{theorem}{Theorem}[section]
\newtheorem{corollary}[theorem]{Corollary}
\newtheorem{lemma}[theorem]{Lemma}
\newtheorem{proposition}[theorem]{Proposition}

\newtheorem*{theorem*}{Theorem (Foia\c{s}, Hamid, Onica and Pearcy, \cite{foias})}

\newtheorem*{theorem**}{Theorem}

\newenvironment{demode}
  {\noindent {{\it Proof of }}}%
  {\par \hfill \fbox{}}

\theoremstyle{definition}

\theoremstyle{definition}

\newtheorem{remark}[theorem]{Remark}

\newcommand{\EL}{\mathcal{L}}

\newcommand{\C}{\mathbb{C}}
\newcommand{\N}{\mathbb{N}}

\newcommand{\ran}{\textnormal{ran}}

\newcommand{\HH}{\mathcal{H}}

\newcommand{\Span}{ {\textnormal{span}}  }

\newcommand{\Id}{\operatorname{\,I}}

\begin{document}
	\title[Hyperinvariant subspaces of hyponormal operators]{Hyperinvariant subspaces of hyponormal operators: A constructive decomposition approach}

    \author{Norberto Clemente and Eva A. Gallardo-Gutiérrez}
	\address{Norberto Clemente  and Eva A. Gallardo-Gutiérrez  \newline
		Departamento de Análisis Matemático y Matemática Aplicada,\newline
		Facultad de CC. Matematicas,
		\newline Universidad Complutense de
		Madrid, \newline
		Plaza de Ciencias N$^{\underbar{\Tiny o}}$ 3, 28040 Madrid,  Spain
		\newline
		and Instituto de Ciencias Matemáticas ICMAT (CSIC-UAM-UC3M-UCM),
		\newline Madrid,  Spain }
\email{norberto.clemente@icmat.es}
	
\email{eva.gallardo@mat.ucm.es}


\thanks{Both authors are partially supported by Plan Nacional  I+D grant no.  PID2022-137294NB-I00, Spain,
		the Spanish Ministry of Science and Innovation, through the ``Severo Ochoa Programme for Centres of Excellence in R\&D'' (CEX2019-000904-S \&
CEX2023-001347) and from the Spanish
National Research Council, through the ``Ayuda extraordinaria a Centros de Excelencia Severo Ochoa'' (20205CEX001). \newline
Second author also acknowledges support of the  Grant PRE2022-101849 funded by: MICIU/AEI/ 10.13039/501100011033 and FSE+.}

\subjclass[2010]{Primary 47A15, 47A55, 47B15}

\keywords{Subnormal operators, hyponormal operators, invariant subspaces, hyperinvariant subspaces}

\date{November, 2025}

\begin{abstract}
It is shown that any hyponormal operator on an infinite-dimensional separable Hilbert space that admits a decomposition \( T = R + V \), where \( R \) is tridiagonal and \( V \) is trace-class, has nontrivial closed hyperinvariant subspaces provided $T$ is not a multiple of the identity. We further discuss implications of this result for the invariant subspace problem of hyponormal operators answering, in particular,  negatively to a question raised by Kim and Lee \cite{kimlee} regarding an explicit approach to such a problem.  Finally, we characterize the existence of reducing subspaces for hyponormal operators addressing an approach by Aronszajn and Smith.
\end{abstract}

\maketitle

\section{Introduction}

Among bounded linear operators on an infinite dimensional complex separable Hilbert space \( \HH \), normal operators are, perhaps, the most thoroughly understood class because of their structural simplicity and rich spectral theory.  Nevertheless, despite such a fact, one of the most challenging problems in Operator Theory that remains nowadays is the question of whether a compact perturbation of a normal (or of a diagonalizable normal or of, even, a self-adjoint) operator necessarily has nontrivial closed invariant subspaces.

\smallskip

This problem has a long and distinguished history: Livšić~\cite{Livsic1957} first solved it for nuclear perturbations, Sahnovič~\cite{Sahnovic1962} extended the result to Hilbert--Schmidt perturbations, and Gohberg and Krein~\cite{GohbergKrein1957}, Macaev~\cite{Macaev1961}, and Schwartz~\cite{Schwartz1963} further generalized it to perturbations in the Schatten--von Neumann class \( \mathcal{C}_p \) for \( 1 \le p < \infty \) (see also~\cite{Kato1995} for further references). 

\smallskip

When restricting normal operators to one of its invariant subspaces (namely, when dealing with \emph{subnormal operators}), a fundamental milestone in the study of invariant subspaces was proved by Brown \cite{scottbrown1}, showing that subnormal operators (or, obviously, those unitarily equivalent to them) have nontrivial closed invariant subspaces.

\smallskip

Moreover, Brown \cite{scottbrown2} proved that every hyponormal operator with \emph{thick spectrum} has nontrivial closed invariant subspaces. Recall that a linear bounded operator $T$ in $\HH$ is \emph{hyponormal} if its self-commutator $[T^{*},T]=T^{*}T-TT^{*}$ is semi-definite positive and every subnormal operator appears to be hyponormal. Nevertheless, it remains unsolved  whether every hyponormal operator admits a nontrivial closed invariant subspace and, indeed, a theorem of Berger and Shaw \cite{bergershaw} shows that the invariant subspace problem for hyponormal operators can be reduced to a special instance of the aforementioned problem regarding compact perturbations of normal operators (see ~\cite[Corollary~8.5]{Pearcy}):

\medskip
\noindent
\begin{quotation}
If every operator \( T = N + K \), where \( N \) is normal and \( K \) is compact, has a nontrivial invariant subspace, then every hyponormal operator has a nontrivial closed invariant subspace.
\end{quotation}
\medskip

At this regards, it is worthy to remark that, recently, the authors of \cite{GG4} proved that a large subclass of trace-class perturbations of normal operators  are decomposable operators in the sense of  Colojoar\u{a} and Foia\c{s} \cite{CF68}, extending previous theorems of Foia\c{s}, Jung, Ko and Pearcy \cite{FJKP07,FJKP08,FJKP11}, Fang and J. Xia \cite{FX12}, and those in \cite{GG,GG3} on an open question posed by Pearcy in the seventies ~\cite[Problem~K]{Pearcy}. Moreover, every operator $T$ in such a class that is not a multiple of the identity operator has a rich spectral structure and plenty of nontrivial closed \emph{hyperinvariant subspaces}, namely invariant subspaces for \( T \) that are invariant under every bounded operator in its commutant \( \{T\}' \).

\medskip

In this context, nevertheless, it remains unknown whether every subnormal operator, or more generally, every hyponormal operator has nontrivial closed hyperinvariant subspaces as long as it is not a multiple of the identity. Indeed, motivated by these results and open questions, the aim of this work is twofold. On one hand,  we prove that those hyponormal operators that can be represented as the sum of a tridiagonal operator and a trace-class operator have, indeed,  nontrivial closed hyperinvariant subspaces provided they are not multiple of the identity.

\medskip

On the other one, having in mind that Berger and Shaw Theorem \cite{bergershaw} yields that a cyclic hyponormal $T$ operator has trace-class self-commutator $[T^{*},T]$ (hence $T$ is essentially normal)  and moreover, if $T$ is a quasitriangular operator, the Brown-Douglas-Fillmore theorem implies $T$ can be written as the sum of a normal operator $N$ and a compact operator $K$, we determine when the operators $N+K$ are normal whenever $K$ is Hilbert-Schmidt. In particular, we exhibit non-normal hyponormal operators that can be written as compact perturbations of normal operators, what answers in the negative a question raised by Kim and Lee in \cite{kimlee} regarding an approach to solve the invariant subspace problem for hyponormal operators.

\medskip

The remainder of this paper is organized as follows. In Section 2, we begin by examining hyponormal tridiagonal operators and establishing the existence of nontrivial closed invariant subspaces for them. It is worth noting that, using moment sequences, Grivaux \cite{G02} proved the existence of invariant subspaces for positive tridiagonal operators on the classical Banach spaces $\ell^p$ ($1 \leq p < \infty$) and $c_0$. However, the general problem of determining whether tridiagonal operators admit nontrivial closed invariant subspaces remains open.

\medskip

Our approach will be proving that every non-normal hyponormal tridiagonal operator $T$ is not \emph{quasitriangular}. Recall that a bounded linear operator \( T \) on a separable infinite-dimensional Hilbert space \( H \) is said to be \emph{quasitriangular} if there exists an increasing sequence \( (P_n)_{n=1}^{\infty} \) of finite-rank projections converging strongly to the identity operator \( I \) as \( n \to \infty \), such that
\[
\|TP_n - P_nTP_n\| \to 0, \quad \text{as } n \to \infty.
\]
Building on the theorem of Aronszajn and Smith \cite{Aron1}, Halmos~\cite{Halmos} introduced the notion of quasitriangular operators in the 1960s to study the existence of invariant subspaces. In the seventies, Douglas and Pearcy~\cite{Douglas-Pearcy}, and Apostol, Foia\c{s}, and Voiculescu~\cite{AFV-1}, proved that those operators being not quasitriangular have nontrivial closed subspaces, so  the so-called \emph{Invariant Subspace Problem} is reduced to the class of quasitriangular operators (see Herrero’s monograph~\cite{Herrero} for further details).

\medskip

Indeed, we will prove that given a non-normal hyponormal tridiagonal operator $T$, there exists a complex number $\lambda\in\mathbb{C}$ such that $T-\lambda I$ is a noninvertible injective Fredholm operator. Consequently,  any non-normal hyponormal tridiagonal operator satisfies that the point spectrum $\sigma_p(T^{*})\neq\emptyset$, so every tridiagonal hyponormal operator has nontrivial closed hyperinvariant subspaces whenever it is not a multiple of the identity. Moreover, the same conclusion holds for those hyponormal operators having an \emph{eventually tridiagonal matrix} with respect to an orthonormal basis $\mathcal{E}$. Likewise, dropping-off the assumption on hyponormality, we show that  all those linear bounded operators $T$ having an \emph{eventually tridiagonal matrix} with respect to an orthonormal basis $\mathcal{E}$ have nontrivial hyperinvariant subspaces whenever their
self-commutator $[T^{*},T]$ is trace-class with nonzero trace.

\medskip

This sort of \emph{decomposition approach} will allow us then to show that  every hy\-po\-normal operator $T$ that can be expressed as a trace-class perturbation of a tridiagonal operator has nontrivial closed hyperinvariant subspaces provided $T$ is not a multiple of the identity.

\medskip

In Section 3 we discuss the normal behavior of those hyponormal operators that can be written as a compact perturbation of a normal operator and exhibit  non-normal hyponormal operators within such a class with trace class-commutator, which answers, in the negative, to a question raised by Kim and Lee  regarding an explicit approach  to the invariant subspace problem for hyponormal operators.

\medskip

Finally, in Section 4, we characterize the existence of reducing subspaces for hyponormal operators addressing the technique by Aronszajn and Smith in the setting of hyponormal operators and relating it to the class
$$\mathcal{S}_T:=\{Q\in\EL(\HH):(I-Q)TQ \text{ is self-adjoint}\}$$
introduced by the authors in \cite{clemgallard}.

\medskip

Throughout the rest of the paper, $\HH$ will always denote an infinite-dimensional separable complex  Hilbert space and $\EL(\HH)$ the Banach algebra of all bounded linear operators on $\HH$.

\section{Compact perturbations of tridiagonal operators: hyperinvariant subspaces}

In this section we will prove that every hyponormal operator that can be ex\-pressed as a trace-class perturbation of a tridiagonal operator has nontrivial closed hyperinvariant subspaces provided it is not a scalar multiple of the identity. As mentioned in the Introduction, our starting point will be examining hyponormal tridiagonal operators and establishing first the existence of nontrivial closed hyperinvariant subspaces for them.

\medskip

For such a goal, recall that $T\in \EL(\HH)$ is a \emph{tridiagonal operator} if there exists an (ordered) orthonormal basis $\mathcal{E}$ whose matrix representation with respect to  $\mathcal{E}$ is a tridiagonal matrix $(a_{mn})_{m, n\in \mathbb{N}}$. Accordingly, let  $\mathcal{E}= \{e_n\}_{n\in \mathbb{N}}$ denote an (ordered) orthonormal basis of $\HH$ fixed. If $\Lambda = \{\lambda_n\}_{n\in \mathbb{N}}$ is any bounded sequence in the complex plane $\C$,  the diagonal operator $D_\Lambda$ with respect to $\mathcal{E}$ associated to $\Lambda$ is defined by
$$D_\Lambda e_n = \lambda_n e_n,\qquad (n\geq 1).$$
Likewise, recall that the (unilateral) forward weighted shift $S_\Omega$  associated to a bounded complex sequence $\Omega=\{w_n\}_{n\in \mathbb{N}}$ is given by
$$S_\Omega e_n = w_n e_{n+1},\qquad (n\geq 1),$$
while its adjoint $S_\Omega^* e_n= \overline{w_{n-1}} e_{n-1}$ for every $n\geq 2$ and $S_\Omega^{*}e_1=0$. We refer to Shield's monograph \cite{Shields} as a basic reference for weighted shifts.

\medskip

\noindent Clearly, that if $T$ is a tridiagonal operator with respect to $\mathcal{E}$ with matrix $(a_{mn})_{m, n\in \mathbb{N}}$, then
$$T=D_\Lambda+S_\Omega+S_\Gamma^{*}$$
where $\Lambda=\{a_{nn}\}_{n\in\N}$, $\Omega=\{a_{n+1, n}\}_{n\in \mathbb{N}}$ and $\Gamma=\{\overline{a_{n,n+1}}\}_{n\in \mathbb{N}}$.

\medskip

With this notation, the next result turns out to be a simple computation.

\begin{proposition}
Let $T\in \EL(\HH)$ be a tridiagonal operator with respect to a basis $\mathcal{E}$ and matrix $(a_{mn})_{m, n\in \mathbb{N}}$ expressed by $T=D_\Lambda+S_\Omega+S_\Gamma^{*}$
where $\lambda_n = a_{nn}$, $w_n=a_{n+1, n}$, and $\gamma_n=\overline{a_{n,n+1}}$ for  $n\in \mathbb{N}$. Then $T$ is normal if and only if the following conditions hold
for every $n\in\N$
\begin{enumerate}
\item $|w_n|=|\gamma_n|$,
\item $\gamma_n(\lambda_{n+1}-\lambda_n)=w_n(\overline{\lambda_{n+1}}-\overline{\lambda_n})$,
\item $w_{n+1}\gamma_{n}=w_{n}\gamma_{n+1}$.
\end{enumerate}
\end{proposition}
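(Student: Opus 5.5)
We compute the self-commutator $[T^*,T]=T^*T-TT^*$ entrywise in the basis $\mathcal{E}$ and require every entry to vanish. Since $T$ is tridiagonal, $T^*T$ and $TT^*$ are pentadiagonal. It therefore suffices to look at the entries $(n,n)$, $(n+1,n)$ and $(n+2,n)$. The entries above the diagonal are conjugates of these, because $[T^*,T]$ is self-adjoint.

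Write $Te_n=\overline{\gamma_{n-1}}e_{n-1}+\lambda_n e_n+w_n e_{n+1}$ and $T^*e_n=\overline{w_{n-1}}e_{n-1}+\overline{\lambda_n}e_n+\gamma_n e_{n+1}$, with the conventions $w_0=\gamma_0=0$. Then $\langle T^*Te_n,e_m\rangle=\langle Te_n,Te_m\rangle$ and $\langle TT^*e_n,e_m\rangle=\langle T^*e_n,T^*e_m\rangle$, which gives the following.

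\emph{Diagonal.} The condition is
\[
|\gamma_{n-1}|^2+|w_n|^2=|w_{n-1}|^2+|\gamma_n|^2 .
\]
Setting $d_n=|w_n|^2-|\gamma_n|^2$, this reads $d_n=d_{n-1}$. Since $d_0=0$, induction gives $d_n=0$ for all $n$, which is condition (1).

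\emph{First off-diagonal.} Pairing $e_n$ with $e_{n+1}$ gives the condition
\[
\lambda_n\overline{w_n}+w_{n}^{*}\ldots
\]
Written out in full, $\langle Te_n,Te_{n+1}\rangle=\lambda_n\overline{\gamma_n}+w_n\overline{\lambda_{n+1}}$ and $\langle T^*e_n,T^*e_{n+1}\rangle=\overline{\lambda_n}w_n+\gamma_n\lambda_{n+1}$. Equating these two expressions and taking complex conjugates where needed gives
\[
\gamma_n(\lambda_{n+1}-\lambda_n)=w_n(\overline{\lambda_{n+1}}-\overline{\lambda_n})
\]
up to conjugation of the whole identity. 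This is condition (2). I will double-check the placement of conjugates against the paper's convention $\gamma_n=\overline{a_{n,n+1}}$.

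\emph{Second off-diagonal.} The $(n+2,n)$ entry gives
\[
\langle Te_n,Te_{n+2}\rangle=w_n\gamma_{n+1}, \qquad \langle T^*e_n,T^*e_{n+2}\rangle=\gamma_n w_{n+1},
\]
up to conjugation. Equating them yields condition (3).

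\emph{Converse.} All other entries of $[T^*,T]$ are zero automatically by bandwidth. Hence conditions (1)–(3) for every $n$ are equivalent to $[T^*,T]=0$. Since $T$ is bounded and the matrix entries determine the operator, this is the same as $T$ being normal.

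The only real obstacle is bookkeeping. The boundary case $n=1$ must be handled through $w_0=\gamma_0=0$, which is what makes the telescoping argument for (1) start at zero. The conjugates in (2) and (3) must also be placed consistently with the definitions of $S_\Omega$ and $S_\Gamma^*$.
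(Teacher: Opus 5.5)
Your approach is the one the paper has in mind: it calls the result a simple computation, and the three entries you use are exactly the ones it computes in the proof of Proposition~\ref{lemma1tridiagonal}. Your diagonal telescoping argument, the $(n+2,n)$ entry $w_n\gamma_{n+1}-\gamma_n w_{n+1}$, and the bandwidth argument for the converse are all correct.

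The $(n+1,n)$ step, however, fails as written. Since $Te_{n+1}=\overline{\gamma_n}e_n+\lambda_{n+1}e_{n+1}+w_{n+1}e_{n+2}$ and the inner product conjugates the second slot, one has $\langle Te_n,Te_{n+1}\rangle=\lambda_n\gamma_n+w_n\overline{\lambda_{n+1}}$. Your value $\lambda_n\overline{\gamma_n}+w_n\overline{\lambda_{n+1}}$ is wrong. With your value, equating the two expressions gives $\lambda_n\overline{\gamma_n}-\gamma_n\lambda_{n+1}=w_n(\overline{\lambda_n}-\overline{\lambda_{n+1}})$. This is not equivalent to (2) or to its conjugate. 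For instance, take the self-adjoint (hence normal) operator $T=I+i(S-S^{*})$, with $S$ the unilateral shift; there your equation reads $-2i=0$. So ``taking complex conjugates where needed'' cannot close this step.

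With the correct value you get directly
\[
\langle [T^{*},T]e_n,e_{n+1}\rangle=\gamma_n(\lambda_n-\lambda_{n+1})+w_n\bigl(\overline{\lambda_{n+1}}-\overline{\lambda_n}\bigr),
\]
whose vanishing is exactly condition (2), with no conjugation required. Replace that line, and delete the garbled display just before it; the proof is then complete.
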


Next proposition states that for non-normal hyponormal tridiagonal operators, the diagonal sequence in its matrix representation must be convergent as well as the modulus of the sequences determined by the superdiagonal and the subdiagonal, respectively.

\begin{proposition}\label{lemma1tridiagonal}
Let $T\in \EL(\HH)$ be a tridiagonal operator with respect to a basis $\mathcal{E}$ and matrix $(a_{mn})_{m, n\in \mathbb{N}}$ expressed by $T=D_\Lambda+S_\Omega+S_\Gamma^{*}$
where $\lambda_n = a_{nn}$, $w_n=a_{n+1, n}$ and $\gamma_n=\overline{a_{n,n+1}}$ for  $n\in \mathbb{N}$. Assume $T$ is a non-normal hyponormal operator. Then the sequence $\{\lambda_n\}_{n\in\N}$ converges. Moreover, the non-negative sequences $\{|w_n|\}_{n\in\N}$ and $\{|\gamma_n|\}_{n\in\N}$ are also convergent.
\end{proposition}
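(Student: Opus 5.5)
The plan is to extract everything from positivity of the self-commutator $C=[T^{*},T]=T^{*}T-TT^{*}\geq 0$, using its matrix entries $c_{mn}=\langle Ce_n,e_m\rangle$ with respect to $\mathcal{E}$. Set $w_0=\gamma_0=0$ and recall
\[
Te_n=\lambda_ne_n+w_ne_{n+1}+\overline{\gamma_{n-1}}e_{n-1},\qquad T^{*}e_n=\overline{\lambda_n}e_n+\gamma_ne_{n+1}+\overline{w_{n-1}}e_{n-1}.
\]

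\emph{Step 1 (diagonal entries).} Put $a_n=|w_n|^2-|\gamma_n|^2$, with $a_0=0$. A direct computation gives
\[
d_n:=c_{nn}=\|Te_n\|^2-\|T^{*}e_n\|^2=a_n-a_{n-1}\geq 0 .
\]
Hence $\{a_n\}$ is nondecreasing. It is bounded, since the weights are bounded by $\|T\|$, so $a_n\to a$. Telescoping then gives $\sum_n d_n=a<\infty$.

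Non-normality enters here. If every $d_n$ vanished, then the positive operator $C$ would have zero trace, forcing $C=0$. So some $d_N>0$, and therefore $a_n\geq a_N>0$ for all $n\geq N$. Consequently, for $n\geq N$,
\[
|w_n|^2\geq |\gamma_n|^2+a_N ,
\]
so $|w_n|\geq\sqrt{a_N}$. Moreover, since all weights are bounded by $M:=\|T\|$, we get $|w_n|-|\gamma_n|=a_n/(|w_n|+|\gamma_n|)\geq a_N/(2M)$.

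\emph{Step 2 (off-diagonal entries are summable).} Positivity of the $2\times 2$ principal minors of $C$ gives $|c_{mn}|^2\leq d_md_n$, and hence $|c_{mn}|\leq (d_m+d_n)/2$. Therefore both $\sum_n|c_{n,n+1}|$ and $\sum_n|c_{n,n+2}|$ are finite. This summability, rather than mere convergence to $0$, is the crux of the argument. Only the $\ell^1$ bound produces convergence of the sequences, not just of their increments. I expect this to be the main point to get right.

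\emph{Step 3 (computing the entries).} Expanding the inner products gives
\[
c_{n,n+1}=\overline{w_n}(\lambda_{n+1}-\lambda_n)-\overline{\gamma_n}\,(\overline{\lambda_{n+1}}-\overline{\lambda_n}),\qquad
c_{n,n+2}=\overline{\gamma_{n+1}w_n-w_{n+1}\gamma_n}.
\]
Consistently, both vanish in the normal case of the preceding Proposition.

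\emph{Step 4 (convergence of $\lambda_n$).} From the formula for $c_{n,n+1}$ and the reverse triangle inequality,
\[
|c_{n,n+1}|\geq (|w_n|-|\gamma_n|)\,|\lambda_{n+1}-\lambda_n|\geq \frac{a_N}{2M}\,|\lambda_{n+1}-\lambda_n|\qquad (n\geq N).
\]
Hence $\sum_n|\lambda_{n+1}-\lambda_n|<\infty$, and $\{\lambda_n\}$ is Cauchy, so it converges.

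\emph{Step 5 (convergence of the moduli).} For $n\geq N$ set $r_n=|\gamma_n|/|w_n|$; this is well defined since $|w_n|\geq\sqrt{a_N}$. Taking moduli in the formula for $c_{n,n+2}$ gives
\[
\bigl||\gamma_{n+1}||w_n|-|w_{n+1}||\gamma_n|\bigr|\leq|c_{n,n+2}| .
\]
Dividing by $|w_n||w_{n+1}|\geq a_N$ yields $|r_{n+1}-r_n|\leq |c_{n,n+2}|/a_N$, which is summable, so $r_n\to r$.

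Next, $r_n^2\leq 1-a_N/|w_n|^2\leq 1-a_N/M^2$, so $r<1$. Then
\[
|w_n|^2(1-r_n^2)=a_n\to a,
\]
which gives $|w_n|^2\to a/(1-r^2)$. Hence $\{|w_n|\}$ converges, and so does $\{|\gamma_n|\}=\{r_n|w_n|\}$.
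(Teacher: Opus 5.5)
Your proof is correct and follows essentially the same route as the paper. Both arguments use the telescoping diagonal entries of $[T^{*},T]$, $\ell^1$-summability of its first and second off-diagonals, the reverse triangle inequality on the $(n,n+1)$ entry to get convergence of $\{\lambda_n\}$, and the $(n,n+2)$ entry to get convergence of the moduli. The differences are only cosmetic: you obtain the $\ell^1$ bounds directly from $|c_{mn}|^2\le c_{mm}c_{nn}$ rather than from trace-class membership, and you finish via the ratio $r_n=|\gamma_n|/|w_n|$ instead of the paper's splitting $|\gamma_{n+1}|^2\alpha_n-|\gamma_n|^2\alpha_{n+1}=|\gamma_{n+1}|^2(\alpha_n-\alpha_{n+1})+(|\gamma_{n+1}|^2-|\gamma_n|^2)\alpha_{n+1}$.
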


\medskip

\begin{proof}
First, let us show that the complex sequence $\{\lambda_n\}_{n\in\N}$ is convergent. Let us define the real numbers sequence $\{\alpha_n\}_{n\in\N_0}$ by:
\begin{equation}\label{sequence a_n}
\alpha_n=\left \{\begin{array}{ll}
|w_n|^2-|\gamma_n|^2& n\geq 1, \\
\noalign{\medskip}
0&n=0.
\end{array} \right.
\end{equation}
Note that
$$\langle [T^{*},T] e_n, e_n\rangle = |w_n|^2 + |\gamma_{n-1}|^2 - |w_{n-1}|^2-|\gamma_{n}|^2 = \alpha_n-\alpha_{n-1}\geq 0$$
for every $n\in \mathbb{N}$ and since, $T$ is assumed to be hyponormal,
$$\alpha_1\geq 0, \qquad \alpha_{n-1}\leq \alpha_n \qquad (n\in\N).$$

\medskip

Accordingly, $\{\alpha_n\}_{n\in\N_0}$ is a non-negative increasing sequence. Moreover, since $\alpha_n\leq 2\|T\|^2$ for every $n\in\N$, it converges implying that the series
\begin{equation}\label{trace-class}
\sum_{n\in\N}\langle[T^{*},T]e_n,e_n\rangle =\lim_n \alpha_n
\end{equation}
is also convergent. Consequently, $[T^{*},T]$ is a trace-class operator because it is positive. In particular, $\{\langle [T^{*},T]e_n,e_{n+1}\rangle\}_{n\in\N}$ is an $\ell^1$-sequence and hence
\begin{align*}
\langle [T^{*},T] e_n,e_{n+1}\rangle = \gamma_n(\lambda_{n}-\lambda_{n+1})+w_n(\overline{\lambda_{n+1}}-\overline{\lambda_{n}}) \in \ell^1.
\end{align*}

\smallskip

\noindent Since $T$ is assumed to be non-normal, there exists some $n_0\in\N$ verifying $\alpha_{n_0}>0$. Consequently, for every $n\geq n_0$,
$$\alpha_n=|w_n|^2-|\gamma_n|^2=(|w_n|+|\gamma_n|)(|w_n|-|\gamma_n|)\geq \alpha_{n_0}>0$$
with $|w_n|,|\gamma_n|\leq \|T\|$. Hence, for every $n\geq n_0$,
$$|w_n|-|\gamma_n|\geq \displaystyle \frac{\alpha_{n_0}}{2\|T\|}>0,$$
and
\begin{eqnarray*}
\left|\langle [T^{*},T] e_n,e_{n+1}\rangle\right|&= &\left |\gamma_n(\lambda_{n}-\lambda_{n+1})+w_n(\overline{\lambda_{n+1}}-\overline{\lambda_n})\right | \\
& \geq & \left||w_n|-|\gamma_n|\right||\lambda_{n+1}-\lambda_n| \\
& \geq & \displaystyle  \frac{\alpha_{n_0}}{2\|T\|}|\lambda_{n+1}-\lambda_n|.
\end{eqnarray*}
Consequently,  $\{\lambda_{n+1}-\lambda_n\}_{n\in\N}$ is an $\ell^1$-sequence, which implies that $\{\lambda_n\}_{n\in\N}$ must be convergent, as claimed.

\medskip

Let us prove now that the non-negative sequences $\{|w_n|\}_{n\in\N}$ and $\{|\gamma_n|\}_{n\in\N}$ must be also convergent.

\medskip

Since \eqref{trace-class} yields that the self-commutator $[T^{*},T]$ is trace-class, it follows that $\{\langle [T^{*},T]e_n,e_{n+2}\rangle \}_{n\in\N}$ must be an $\ell^1$-sequence, that is,
$$\langle [T^{*},T]e_n,e_{n+2}\rangle = \gamma_{n+1}w_{n}-w_{n+1}\gamma_{n}\in \ell^1.$$
Multiplying by the bounded sequence $\{\gamma_{n+1}w_{n}+w_{n+1}\gamma_{n}\}_{n\in\N}$, it follows that the sequence
$\{\gamma_{n+1}^2w_{n}^2 -w_{n+1}^2\gamma_{n}^2\}_{n\in \mathbb{N}} \in \ell^1.$ Now,
\begin{eqnarray*}
|\gamma_{n+1}^2w_{n}^2 -w_{n+1}^2\gamma_{n}^2|&\geq & \left | |\gamma_{n+1}|^2|w_{n}|^2 -|w_{n+1}|^2 | \gamma_{n}|^2 \right|\\
&=& \left||\gamma_{n+1}|^2(\alpha_{n}-\alpha_{n+1})+(|\gamma_{n+1}|^2-|\gamma_{n}|^2)\alpha_{n+1}\right|,
\end{eqnarray*}
and the sequence $\{|\gamma_{n+1}|^2(\alpha_{n}-\alpha_{n+1})\}_{n\in\N}$ is summable, so $$\{(|\gamma_{n+1}|^2-|\gamma_{n}|^2)\alpha_{n+1}\}_{n\in\N}\in\ell^1.$$

\smallskip

Since $\alpha_{n}\geq \alpha_{n_0}>0$ for every $n\geq n_0$, it follows that $\{|\gamma_{n+1}|^2-|\gamma_{n}|^2\}_{n\in\N}\in\ell^1$, that is, $\{|\gamma_n|\}_{n\in\N}$ is convergent. Since $\{\alpha_n\}_{n\in\N}$ also converges, $\{|w_n|\}_{n\in\N}$ must converge as well, as we wished to prove.
\end{proof}

The next theorem establishes that every non-normal hyponormal tridiagonal operator is not quasitriangular. As mentioned in the Introduction,  an operator $T \in \EL(\HH)$ is called \emph{quasitriangular} if there exists an increasing sequence of finite-rank projections $(P_n)$ converging strongly to the identity such that $\|TP_n - P_nTP_n\| \to 0$.

\begin{theorem}\label{theo1tridiagonal}
Let $T\in \EL(\HH)$ be a tridiagonal operator with respect to a basis $\mathcal{E}$.  Assume $T$ is hyponormal. Then $T$ is normal if and only if $T$ is quasitriangular.
\end{theorem}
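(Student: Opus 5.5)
One direction is classical: every normal operator is quasitriangular. Indeed, by the Weyl--von Neumann--Berg theorem a normal operator is a compact perturbation of a diagonal operator. Diagonal operators are triangular, and quasitriangularity is stable under compact perturbations, so normal operators are quasitriangular. Alternatively, one can invoke the Apostol--Foia\c{s}--Voiculescu characterization: $T$ is quasitriangular if and only if $\operatorname{ind}(T-\lambda I)\ge 0$ for every $\lambda$ with $T-\lambda I$ semi-Fredholm. For normal $T$ the index is always $0$.

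For the converse, assume $T$ is hyponormal and non-normal. I will produce $\lambda$ with $T-\lambda I$ Fredholm of negative index; by the Apostol--Foia\c{s}--Voiculescu theorem, $T$ is then not quasitriangular. By Proposition \ref{lemma1tridiagonal}, $\lambda_n\to\lambda_\infty$, $|w_n|\to w$ and $|\gamma_n|\to\gamma$. Moreover, $\alpha_n=|w_n|^2-|\gamma_n|^2$ increases to $w^2-\gamma^2\ge\alpha_{n_0}>0$, so $w>\gamma\ge 0$.

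\textbf{Reduction to a model operator.} Conjugating by a diagonal unitary $U e_n=\theta_n e_n$ with $|\theta_n|=1$, I can make the subdiagonal entries $w_n$ nonnegative: choose $\theta_{n+1}$ successively. This leaves $D_\Lambda$ unchanged. The superdiagonal entries then become $\overline{\gamma_n'}$ with $|\gamma_n'|=|\gamma_n|$, but their phases are not controlled. Hence, modulo compact operators, $T-\lambda_\infty I\equiv wS+S^*D_\varphi\gamma$ with $S$ the unilateral shift and $D_\varphi$ a diagonal unitary. The phases $\varphi_n$ need not converge, and this is the main obstacle. I would handle it by a norm estimate that does not need the phases. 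For $|\mu|$ small, write
\[
T-(\lambda_\infty+\mu)I=wS\bigl(I+w^{-1}S^*(S^*D_\varphi\gamma-\mu)\bigr)+K,
\]
with $K$ compact. Using $S^*S=I$, this is $wS+S^*D_\varphi\gamma-\mu+K$, and it can be written as $wS(I+A)+K'$ with $\|A\|\le (\gamma+|\mu|)/w$, modulo compacts. Indeed, $S S^*=I-P_{e_1}$, so $S^*D_\varphi\gamma-\mu = S\,(S^*(S^*D_\varphi\gamma-\mu))$ up to a rank-one term. Since $\gamma<w$, choose $|\mu|<w-\gamma$; then $I+A$ is invertible. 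It follows that $T-(\lambda_\infty+\mu)I$ is a compact perturbation of $wS\cdot(\text{invertible})$, which is Fredholm of index $-1$.

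This yields $\operatorname{ind}(T-\lambda I)=-1<0$ for $\lambda$ in a disc around $\lambda_\infty$. Therefore $T$ is not quasitriangular, which completes the converse. The main step to verify carefully is the factorization above: the absorption of the diagonal and superdiagonal parts into $S(I+A)$ through the identity $SS^*=I-P_{e_1}$, with the norm bound $\|A\|\le(\gamma+|\mu|)/w<1$ on the tail. If a cleaner route is needed, I would instead compare $T-\lambda I$ with the Toeplitz-like operator and use the essential-spectrum estimate $\|(T-\lambda)x\|\ge (w-\gamma-|\mu|-\varepsilon)\|x\|$ for $x$ supported far out, which already shows that $T-\lambda I$ has closed range. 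Injectivity then follows from hyponormality, since $\ker(T-\lambda)\subseteq\ker(T-\lambda)^*$. Finally, $\ker(T-\lambda)^*$ is nontrivial: solving the three-term recurrence for $(T-\lambda)^*x=0$ from $x_1=1$ gives a solution decaying geometrically at ratio about $\gamma/w<1$, so it lies in $\ell^2$. This again gives negative index.
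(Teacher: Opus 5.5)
Your proof is correct, and its key step takes a different route from the paper's. After the same reductions ($\lambda_n\to\lambda$, $w_n\ge 0$ with $w_n\to w$, $|\gamma_n|\to\gamma<w$), the paper uses the summability of $\langle [T^{*},T]e_n,e_{n+2}\rangle$ to get the extra fact $\gamma_{n+1}-\gamma_n\to 0$. This makes $S$ and $S_\Gamma^{*}$ essentially commute. The spectral inclusion for commuting elements of the Calkin algebra then places $\sigma_e(T-\lambda I)$ inside the annulus $\{we^{i\theta}+\gamma e^{i\delta}:\theta,\delta\in[0,2\pi]\}$, which misses $0$; the case $\gamma=0$ is treated separately. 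The paper then reads off the index $-1$ from the model $wS+S_\Gamma^{*}$ without further justification.

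Your factorization $wS+B=wS\bigl(I+w^{-1}S^{*}B\bigr)+P_{e_1}B$, with $\|B\|\le\gamma+|\mu|<w$, needs no control of the phases of $\gamma_n$, no essential commutativity, and no case split. It computes the index directly as $\operatorname{ind}(S)+\operatorname{ind}(I+A)=-1$ on the whole disc $|\mu|<w-\gamma$. In particular, it supplies the index computation the paper leaves implicit, which would otherwise need something like the homotopy $wS+tS_\Gamma^{*}$, $t\in[0,1]$. The same Neumann-series estimate also gives Fredholmness for $|\mu|>w+\gamma$, so you recover the paper's annulus as well. The paper's argument is more conceptual, but yours shows that the slowly varying phases are not needed.

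Two minor points. First, the identity behind your display is $SS^{*}=I-P_{e_1}$, not $S^{*}S=I$; you correct this in the next sentence. Second, your fallback via the three-term recurrence is loose: for $\mu=0$ the decay per step is about $\sqrt{\gamma/w}$, and solving from $x_1=1$ requires every $w_n\neq 0$. The factorization already settles the index, so the fallback is not needed.
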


\medskip

In order to prove Theorem \ref{theo1tridiagonal}, we will make use of the close connection between quatriangularity and the Calkin algebra showed by Apostol, Foias and Voiculescu, and Douglas and Pearcy in the seventies. If $\mathcal{K}(\HH)$ denotes the ideal of compact operators in $\EL(\HH)$, recall that the \emph{Calkin algebra} is the quotient $\mathcal{C}(H) = \EL(\HH)/\mathcal{K}(\HH)$, representing operators modulo compact perturbations. An operator $T \in \EL(\HH)$ is \emph{semi-Fredholm} if its range is closed and either the kernel of $T$, $\ker T$, or the kernel of $T^{*}$ is finite-dimensional; and it is called \emph{Fredholm} if both $\ker T$ and $\ker T^{*}$ are finite-dimensional; or equivalently, $T$ is Fredholm if and only if its class $[T]$ is invertible in $\mathcal{C}(H)$. The \emph{Fredholm index} for a semi-Fredholm operator is then defined by
$$\operatorname{ind}(T) = \dim(\ker T) - \dim(\ker T^{*})$$
an integer invariant-stable under compact perturbations.

\medskip

In particular, Douglas and Pearcy \cite[Theorem 1]{Douglas-Pearcy} and Apostol, Foias and Voiculescu \cite{AFV-1} showed that $T$ is quasitriangular its \textit{negative semi-Fredholm domain is empty} (see  \cite[Chapter6]{Herrero} for more on the subject).
Equivalently, $T$ is quasitriangular if and only if $T-\lambda I$ is not semi-Fredholm with negative index, that is,\
$$\operatorname{ind}(T - \lambda I) \ge 0$$
for all $\lambda \in \mathbb{C}$ such that $T - \lambda I$ is semi-Fredholm. Thus, the Calkin algebra provides a natural framework for characterizing quasitriangular operators via the Fredholm index.

\medskip

\medskip

\begin{demode}\emph{Theorem \ref{theo1tridiagonal}. }
Since every normal operator is quasitriangular, in order to prove the nontrivial statement of the theorem, let us assume that $T$ is not normal. Since $T$ is assumed to be a tridiagonal operator with respect to the basis $\mathcal{E}$, denote $\mathcal{E}=\{e_n\}_{n\in \mathbb{N}}$ and $(a_{mn})_{m, n\in \mathbb{N}}$ the matrix representation of $T$ and write
$$T=D_\Lambda+S_\Omega+S_\Gamma^{*}$$
where $\lambda_n = a_{nn}$, $w_n=a_{n+1, n}$ and $\gamma_n=\overline{a_{n,n+1}}$ for  $n\in \mathbb{N}$. Note that $a_{n+k, n}=a_{n, n+k}=0$ for all $k\geq 2$.

\medskip

Proposition \ref{lemma1tridiagonal} along with a suitable application of a unitary transformation (of the form $Ue_n = s_n e_n$  with $|s_n|=1$ for every $n\in\N$) allows us to assume the following:
\begin{enumerate}
\item The sequence $\{\lambda_n\}_{n\in\N}$ converges to some $\lambda\in\C$,
\item the sequence $\{w_n\}_{n\in\N}$ is non-negative and convergent to some $w>0$,
\item the sequence $\{|\gamma_n|\}_{n\in\N}$ converges to some $\gamma\geq 0$, with $w>\gamma$.
\end{enumerate}

Moreover, by means of Proposition \ref{lemma1tridiagonal},  $\{\gamma_{n+1}w_{n}-w_{n+1}\gamma_{n}\}_{n\in\N}$ is an $\ell^1$ sequence, so it converges to zero, implying that the sequence $\{\gamma_{n+1}-\gamma_{n}\}_{n\in\N}$ converges to zero as well.

\medskip

We proceed by distinguishing two cases.

\medskip

\noindent $\bullet$ \underline{Case 1}: $\gamma=0$.

\medskip

In this case, $T$ is essentially unitarily equivalent (namely, unitarily equivalent in the Calkin algebra) to the operator $T_2=\lambda I +w S$ (where $S$ is the unilateral shift). Accordingly, $T-\lambda I$ is Fredholm with $\text{ind}(T_2-\lambda)=\text{ind }wS=-1$ and the result holds.

\medskip

\noindent $\bullet$ \underline{Case 2}: $\gamma>0$.

\medskip

In this case $T$ is essentially equivalent to the operator $T_2=\lambda I+w S+S^{*}_{\Gamma}$.

\smallskip

Now, since $[S,S_\Gamma^{*}]$ is a diagonal operator with diagonal sequence $\{\overline{\gamma_{n-1}}-\overline{\gamma_{n}}\}_{n\in\N}$ (where $\gamma_0=0$), it is compact. Thus the operators $\lambda I+S$ and $S_\Gamma^{*}$ essentially commute and we have that the essential spectra of $T-\lambda I$ satisfies
$$\sigma_e(T-\lambda I)\subseteq  \sigma_e(wS)+\sigma_e(S^{*}_{\Gamma})=\{we^{i\theta}+ \gamma e^{i\delta}:\theta,\delta\in[0,2\pi]\}.$$
Since $w>\gamma$, it follows that $0\notin\{we^{i\theta}+ \gamma e^{i\delta}:\theta,\delta\in[0,2\pi]\}$ and hence, $T-\lambda I$ is a Fredholm operator.

Finally, having into account that $\text{ind}(T-\lambda I)=\text{ind}(T_2-\lambda I)=-1$, it follows that $T$ is not quasitriangular, and Theorem \ref{theo1tridiagonal} is proved.
\end{demode}

A straightforward consequence of Theorem \ref{theo1tridiagonal} is the following;

\begin{corollary}\label{cor1tridiagonal}
Every hyponormal tridiagonal operator $T\in \EL(\HH)\setminus \mathbb{C}\Id$ has nontrivial closed hyperinvariant subspaces.\end{corollary}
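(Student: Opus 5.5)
I would split the argument according to whether $T$ is normal or not, with both branches resting on a standard principle: if some $A\in\{T\}''$ (or simply $T$ itself, or $T^{*}$) has a nontrivial kernel that is not the whole space, then that kernel is hyperinvariant. Concretely, if $\mu$ is an eigenvalue of $T^{*}$ and $T\neq \mu I$, then $\ker(T-\mu I)^{*}$ is a nonzero proper closed subspace. Its orthogonal complement $\overline{\ran(T-\mu I)}$ is invariant under every $X\in\{T\}'$, because $X(T-\mu I)=(T-\mu I)X$. The same reasoning applies to $\ker(T-\mu I)$ whenever it is nonzero and proper.

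\emph{Normal case.} If $T$ is normal and not scalar, its spectrum contains at least two points. The spectral theorem then provides a spectral projection $E(\Delta)$, for a Borel set $\Delta$ splitting $\sigma(T)$, with $0\neq E(\Delta)\neq I$. By Fuglede's theorem every operator commuting with $T$ also commutes with $T^{*}$, and hence with all spectral projections of $T$. Therefore $\ran E(\Delta)$ is a nontrivial hyperinvariant subspace. This case uses only classical facts and does not need tridiagonality.

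\emph{Non-normal case.} If $T$ is hyponormal, tridiagonal and not normal, the proof of Theorem \ref{theo1tridiagonal} gives $\lambda=\lim_n\lambda_n$ such that $T-\lambda I$ is Fredholm with $\operatorname{ind}(T-\lambda I)=-1$. The key step is to show that $\ker(T-\lambda I)=\{0\}$. For hyponormal $T$ we have $\|(T-\lambda I)^{*}x\|\le\|(T-\lambda I)x\|$, so $\ker(T-\lambda I)\subseteq\ker(T-\lambda I)^{*}$. This alone does not force the kernel to vanish. Instead I would argue directly from the tridiagonal structure: an eigenvector $x$ satisfies a three-term recurrence determined by $x_1$, namely $w_n x_n=(\lambda-\lambda_{n+1})x_{n+1}-\overline{\gamma_{n+1}}x_{n+2}$ (suitably indexed). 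Moreover, $x$ is also an eigenvector of $T^{*}$ with eigenvalue $\bar\lambda$, since eigenvectors of hyponormal operators are reducing. Combining the two recurrences should force $x=0$; the first equation in particular relates $x_1$ and $x_2$. If that fails, an alternative is to observe that $\ker(T-\lambda I)$ is reducing, because it is an eigenspace of a hyponormal operator. On its orthogonal complement $T$ is still hyponormal and non-normal, so one can pass to the restriction. In either situation the index $-1$ gives $\dim\ker(T-\lambda I)^{*}=\dim\ker(T-\lambda I)+1\ge1$, so $\bar\lambda\in\sigma_p(T^{*})$.

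Since $T$ is non-normal, $T\neq\lambda I$, and $\ker(T-\lambda I)^{*}$ is a nonzero subspace. It is proper: it is finite-dimensional because $T-\lambda I$ is Fredholm, while $\HH$ is infinite-dimensional. By the principle stated at the outset, $\ker(T-\lambda I)^{*}$, or equivalently $\overline{\ran(T-\lambda I)}=\ran(T-\lambda I)$, is a nontrivial closed hyperinvariant subspace.

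The main obstacle is the injectivity step. Fortunately it is not really needed. Nonemptiness of $\sigma_p(T^{*})$ follows from the index alone, and finite-dimensionality of $\ker(T-\lambda I)^{*}$ follows from the Fredholm property. So I would present the argument using only the index, and mention injectivity as a remark.
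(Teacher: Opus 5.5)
Your argument is correct and follows essentially the paper's route: non-scalar normal operators are handled by spectral projections and Fuglede's theorem. In the non-normal case, the index $-1$ from Theorem \ref{theo1tridiagonal} gives $\bar\lambda\in\sigma_p(T^{*})$, so $\overline{\ran(T-\lambda I)}$ is a nontrivial hyperinvariant subspace, and you are right that injectivity of $T-\lambda I$ is not needed. One slip to fix: $\ker(T-\lambda I)^{*}$ is hyperinvariant for $T^{*}$, not for $T$ (for the unilateral shift, $\ker S^{*}$ is not even $S$-invariant), so the ``or $T^{*}$'' in your opening principle and the naming of $\ker(T-\lambda I)^{*}$ in your final sentence are wrong as stated; the subspace for $T$ is its orthogonal complement $\ran(T-\lambda I)$, as you correctly say in your first paragraph.
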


Next corollary shows that it is possible to drop off the assumption on hyponormality in Corollary \ref{cor1tridiagonal} for operators having an \emph{eventually tridiagonal matrix} with respect to an orthonormal basis $\mathcal{E}$ with a pay-off: The self-commutator $[T^{*},T]$ must be a trace-class operator.

\medskip

\begin{corollary}\label{cor3tridiagonal}
Let $T\in \EL(\HH)\setminus \mathbb{C}\Id$  and suppose that there exists an orthonormal basis $\mathcal{E}=\{e_n\}_{n\in\N}$ such that
$$\langle T e_n, e_{n+ \ell}\rangle = \langle T^{*} e_n, e_{n + \ell}\rangle =0$$
for every $\ell\geq 2$ and $n\geq n_0$ for some $n_0\in\N$. If $[T^{*},T]$ is a trace-class operator with non-zero trace, then $T$ has nontrivial closed hyperinvariant subspaces.
\end{corollary}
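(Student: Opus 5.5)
The plan is to find a point $\lambda\in\C$ at which $T-\lambda I$ is Fredholm with nonzero index, and then take a kernel. Suppose $\operatorname{ind}(T-\lambda I)=-1$. Then $\ker(T^{*}-\overline{\lambda}I)$ is a nonzero finite-dimensional subspace, and it is hyperinvariant for $T^{*}$. Its orthogonal complement is therefore a nontrivial closed hyperinvariant subspace for $T$: it is nonzero because $\HH$ is infinite-dimensional, and $A\in\{T\}'$ implies $A^{*}\in\{T^{*}\}'$. The case of index $+1$ is symmetric, using $\ker(T-\lambda I)$ directly.

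\textbf{Step 1: reduce to a tridiagonal tail.} First modify finitely many entries. By hypothesis $T=R+F$ with $F$ of finite rank and $R$ tridiagonal with respect to $\mathcal{E}$. I will write $R=D_\Lambda+S_\Omega+S_\Gamma^{*}$ as in Proposition \ref{lemma1tridiagonal}; only the entries with index $n\ge n_0$ will matter. For $n$ larger than $n_0+2$ the matrix entries of $[T^{*},T]$ on the diagonal, first off-diagonal and second off-diagonal are given by the same formulas as in the proof of Proposition \ref{lemma1tridiagonal}.

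\textbf{Step 2: identify the trace.} Next I will show that $\operatorname{tr}[T^{*},T]=\lim_n(|w_n|^2-|\gamma_n|^2)$. Let $P_N$ be the projection onto $\Span\{e_1,\dots,e_N\}$. Then
\[
\operatorname{tr}(P_N[T^{*},T]P_N)=\|TP_N\|_2^2-\|T^{*}P_N\|_2^2 .
\]
For $N$ large, every column and row norm cancels except the boundary contribution, which equals $|w_N|^2-|\gamma_N|^2$. Since $[T^{*},T]$ is trace-class, the left side converges to its trace. Hence $\alpha_n:=|w_n|^2-|\gamma_n|^2$ converges to $\alpha=\operatorname{tr}[T^{*},T]\neq 0$. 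Replacing $T$ by $T^{*}$ if necessary, which swaps the roles of $w$ and $\gamma$ and the sign of $\alpha$ and preserves the existence of hyperinvariant subspaces via orthogonal complements, we may assume $\alpha>0$. Then $|w_n|-|\gamma_n|\ge c>0$ for all large $n$.

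\textbf{Step 3: rerun the convergence argument.} I will repeat the proof of Proposition \ref{lemma1tridiagonal}, using trace-class in place of hyponormality. For a trace-class $A$, every diagonal band $\{\langle Ae_n,e_{n+k}\rangle\}_n$ is in $\ell^1$. This makes $\{\alpha_{n+1}-\alpha_n\}$, the sequence $\{\gamma_n(\lambda_n-\lambda_{n+1})+w_n(\overline{\lambda_{n+1}}-\overline{\lambda_n})\}$, and the sequence $\{\gamma_{n+1}w_n-w_{n+1}\gamma_n\}$ all summable for large $n$. The lower bound $|w_n|-|\gamma_n|\ge c$ gives $\{\lambda_{n+1}-\lambda_n\}\in\ell^1$. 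The same manipulation as in the proposition, now using $\alpha_{n+1}\ge \alpha/2>0$ eventually, gives convergence of $|\gamma_n|$ and $|w_n|$ with limits $\gamma<w$.

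\textbf{Step 4: compute the index.} Finally I will follow the proof of Theorem \ref{theo1tridiagonal} verbatim. After a diagonal unitary and a compact perturbation, which absorbs $F$ and the tail errors, $T$ is essentially $\lambda I+wS+S_\Gamma^{*}$. The two summands essentially commute, so $0\notin\sigma_e(T-\lambda I)$ because $w>\gamma$. This gives $\operatorname{ind}(T-\lambda I)=-1$, and the kernel argument above finishes the proof.

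I expect the main obstacle to be Step 2: doing the boundary bookkeeping correctly so that the trace genuinely equals $\lim_n\alpha_n$. In particular, I need to check that the finite-rank head contributes nothing, since $\operatorname{tr}[X^{*},X]=0$ for $X$ finite-rank and the cross terms with the tail are finite-rank.
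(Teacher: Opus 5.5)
Your proposal is correct and follows the paper's route: split off the finite-rank head, identify the trace of the self-commutator with that of the tridiagonal part, rerun Proposition~\ref{lemma1tridiagonal} and Theorem~\ref{theo1tridiagonal} to get $\operatorname{ind}(T-\lambda I)=-1$, and pass to $T^{*}$ when the trace is negative. Your Step 2, which shows $\lim_n\alpha_n=\operatorname{tr}[T^{*},T]\neq 0$ and uses this in place of the monotonicity that hyponormality gives, makes explicit what the paper leaves as ``arguing as in''. One small correction: the first $n_0-1$ rows and columns of $T$ may have infinitely many nonzero entries, so the band entries of $[T^{*},T]$ are not exactly the tridiagonal formulas, and the boundary term in Step 2 is $\alpha_N+o(1)$ rather than exactly $\alpha_N$. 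The clean fix is to run Steps 2--3 on $[R^{*},R]$, which differs from $[T^{*},T]$ by a finite-rank operator of trace zero.
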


\medskip

\begin{proof}
Note that if $n_0=1$, $T$ is a tridiagonal operator and if $n_0\geq 2$, the matrix representation of $T$ with respect to the decomposition  $\HH=\Span\{e_1,\dots,e_{n_0-1}\}\oplus\overline{\Span\{e_n:n\geq n_0\}}$ is given by:
$$T=\begin{pmatrix}
A & B\\
C & \tilde{T}_\Lambda
\end{pmatrix},$$
where $A,B,C$ are finite-rank operators and $\tilde{T_\Lambda}$ is a tridiagonal operator in
$$\overline{\Span \{e_n:n\geq n_0\}}$$
which can be expressed by $\tilde{T_\Lambda}=D_\Lambda + S_\Omega+S_{\Gamma}^{*}$, as in the proof of Corollary \ref{cor1tridiagonal}.

\smallskip

In this latter case, the self-commutator of $T$ is
$$[T^{*},T]=\begin{pmatrix}
A^{*}A-AA^{*}+C^{*}C-BB^{*} & *\\
* & [\tilde{T}_\Lambda^{*},\tilde{T}_\Lambda]+B^{*}B-CC^{*}.
\end{pmatrix}$$

Therefore, its trace turns out to be
$$\text{tr}([T^{*},T])=\text{tr}([A^{*},A])+\left[\begin{pmatrix}0 & B\\
C& 0 \end{pmatrix}^{*},\begin{pmatrix}0 & B\\
C& 0 \end{pmatrix}\right]+\text{tr}([\tilde{T}_\Lambda^{*},\tilde{T}_\Lambda])=\text{tr}([\tilde{T}_\Lambda^{*},\tilde{T}_\Lambda])$$
where we note that $A,B,C$ are finite-rank operators and the traces of their self-commutators are zero.

\smallskip

So, in any case, $[\tilde{T}_\Lambda^{*},\tilde{T}_\Lambda]$ is a trace-class operator since $[T^{*},T]$ is  trace-class.

In the case $\text{tr}[T^{*},T]>0$, arguing as in Proposition \ref{lemma1tridiagonal} and Theorem \ref{theo1tridiagonal}, it follows that $\{\lambda_n\}_{n\in\N}$ is a convergent sequence to a complex $\lambda$ such that $T-\lambda I$ is a Fredholm operator with $\text{ind}(T-\lambda I)=-1$, which yields the statement of the corollary.

In the case $\text{tr}[T^{*},T]<0$ we can consider the adjoint operator

$$T^{*}=\begin{pmatrix}
A^{*} & C^{*}\\
B^{*} & \tilde{T}_\Lambda^{*}
\end{pmatrix},$$

\noindent that verifies the same hypothesis than $T$ and the trace of its self-commutator is strictly positive. By the previous case, there exists some $\overline{\lambda}\in\C$ such that $\{\overline{\lambda_n}\}_{n\in\N}$ is convergent to $\overline{\lambda}$ such that $T^{*}-\overline{\lambda} I$ is Fredholm with $\text{ind}(T^{*}-\overline{\lambda} I)=-1$, so $T-\lambda I$ is Fredholm and $\text{ind}(T-\lambda I)=1$. This completes the proof.
\end{proof}

\medskip

The previous results yield a \emph{decomposition approach} in order to show the main result of this section which states that hyponormal operators $T\in \EL(\HH)\setminus \mathbb{C}\Id$ that admit a decomposition \( T = R + V \), where \( R \) is tridiagonal and \( V \) is trace-class have nontrivial closed hyperinvariant subspaces.

\medskip

\begin{theorem}
Let $T\in \EL(\HH)\setminus \mathbb{C}\Id$ a hyponormal operator and suppose that there exists an orthonormal basis $\mathcal{E}$ such that \( T = R + V \) with $R$ a tridiagonal operator with respect to $\mathcal{E}$ and $V$ a trace-class operator. Then $T$ has nontrivial closed hyperinvariant subspaces.
\end{theorem}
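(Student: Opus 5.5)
We reduce the statement to the Fredholm-index argument behind Theorem \ref{theo1tridiagonal} and Corollary \ref{cor3tridiagonal}. If $T$ is normal and not a multiple of the identity, it has nontrivial hyperinvariant subspaces: spectral projections of $T$ commute with $\{T\}'$ by Fuglede's theorem. So assume $T$ is non-normal. By Berger--Shaw type reasoning we expect trace-class self-commutators, but we do not need Berger--Shaw here.

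The first step is to compute $[R^{*},R]$. Writing $T=R+V$ gives
$$[T^{*},T]=[R^{*},R]+R^{*}V+V^{*}R+V^{*}V-RV^{*}-VR^{*}-VV^{*}.$$
Since $V$ is trace-class, $[R^{*},R]$ differs from the positive operator $[T^{*},T]$ by a trace-class operator. We want $[R^{*},R]$ itself trace-class, and $\operatorname{tr}[R^{*},R]=\operatorname{tr}[T^{*},T]$.

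Write $R=D_\Lambda+S_\Omega+S_\Gamma^{*}$ as in Proposition \ref{lemma1tridiagonal}. The diagonal entries of $[R^{*},R]$ are $\alpha_n-\alpha_{n-1}$, where $\alpha_n=|w_n|^2-|\gamma_n|^2$, and these are bounded below by $-\epsilon_n$ with $\epsilon_n=|\langle Ke_n,e_n\rangle|$ summable, $K$ being the trace-class difference. So the partial sums $\alpha_N=\sum_{n\le N}(\alpha_n-\alpha_{n-1})$ are bounded ($|\alpha_N|\le 2\|R\|^2$). Also the negative parts are summable, so the positive parts are summable too, and $\sum|\alpha_n-\alpha_{n-1}|<\infty$. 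Hence the diagonal of $[R^{*},R]$ is absolutely summable. Moreover $[R^{*},R]$ is pentadiagonal. We show it is trace-class by showing its three nonzero diagonals are $\ell^1$. The main diagonal is done. For the other two, use that the $k$-th diagonal of the positive operator $P=[T^{*},T]$ satisfies $|\langle Pe_n,e_{n+k}\rangle|\le \tfrac12(\langle Pe_n,e_n\rangle+\langle Pe_{n+k},e_{n+k}\rangle)$. The diagonal of $P$ is $\ell^1$, since it is the diagonal of $[R^{*},R]$ plus that of $K$. Thus $P$ is trace-class, hence so is $[R^{*},R]=P-K$. Taking traces, the cross terms cancel because $\operatorname{tr}(XY-YX)=0$ when one factor is trace-class. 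This gives $\operatorname{tr}[R^{*},R]=\operatorname{tr}[T^{*},T]$, which is $>0$ because $T$ is hyponormal and non-normal.

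Next apply Corollary \ref{cor3tridiagonal}'s argument (with $n_0=1$) to $R$. Its self-commutator is trace-class with positive trace, so $\alpha_n\to\lim\alpha_n=\operatorname{tr}[R^{*},R]>0$. The proof of Proposition \ref{lemma1tridiagonal} then works verbatim, since it uses only trace-class-ness and eventual positivity of $\alpha_n$. This yields $\lambda$ with $R-\lambda I$ Fredholm of index $-1$. Because $V$ is compact, $T-\lambda I$ is Fredholm of index $-1$.

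Finally, $T-\lambda I$ has closed range and is not surjective. If $\ker(T-\lambda I)\neq\{0\}$, this kernel is hyperinvariant; it is proper since $T\neq\lambda I$. Otherwise $\ker(T^{*}-\bar\lambda I)$ is finite-dimensional and nonzero, so $\overline{\ran}(T-\lambda I)$ is a nontrivial closed hyperinvariant subspace. (Hyponormality also gives $\ker(T-\lambda I)\subseteq\ker(T^{*}-\bar\lambda I)$, though this is not needed.) The main obstacle is the trace-class/positivity step for $[R^{*},R]$, since hyponormality does not pass to $R$ directly; the diagonal-summability argument above is what we try first.
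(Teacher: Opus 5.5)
Your proposal is correct and follows essentially the same route as the paper. You use boundedness of $\alpha_N$ and positivity of $[T^{*},T]$ to get $[T^{*},T]$, hence $[R^{*},R]$, trace-class with $\operatorname{tr}[R^{*},R]=\operatorname{tr}[T^{*},T]>0$, then run the argument of Proposition~\ref{lemma1tridiagonal} and Theorem~\ref{theo1tridiagonal} on $R$ (as in Corollary~\ref{cor3tridiagonal}) to get $\operatorname{ind}(T-\lambda I)=-1$; the off-diagonal estimate for $P$ is superfluous, since a positive operator with summable diagonal is already trace-class, and your remark that $\sum|\alpha_n-\alpha_{n-1}|<\infty$ must now come from trace-class-ness rather than monotonicity is exactly the adaptation the paper leaves implicit.
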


\medskip

\begin{proof}
Assume $T$ is not normal and write the tridiagonal operator $R=D_\Lambda+S_\Omega+S_\Gamma^{*}$ with $\Lambda=\{\lambda_n\}_{n\in \mathbb{N}}$, $\Omega=\{w_n\}_{n\in \mathbb{N}}$ and $\Gamma=\{\gamma_n\}_{n\in \mathbb{N}}$ the sequences related to the diagonal, subdiagonal and superdiagonal entries of the matrix of $R$.

Recalling the sequence $\{\alpha_n\}_{n\in\N}$ in \eqref{sequence a_n}:
\begin{equation*}
\alpha_n=\left \{\begin{array}{ll}
|w_n|^2-|\gamma_n|^2& n\geq 1, \\
\noalign{\medskip}
0&n=0,
\end{array} \right.
\end{equation*}
we have
$$\langle [T^{*},T]e_n,e_n\rangle = \alpha_n-\alpha_{n-1} + \langle ([R^{*},V]+[V^{*},R]+[V^{*},V])e_n,e_n\rangle.$$

Now, since the operator $V$ is trace-class, it holds  $([R^{*},V]+[V^{*},R]+[V^{*},V])$ is a trace-class operator with zero trace (see \cite{laurictracezero}).

Then
$$\sum_{n=1}^\infty \langle [T^{*},T]e_n,e_n\rangle =\lim_N\sum_{n=1}^N \langle [T^{*},T]e_n,e_n\rangle =\lim_N\sum_{n=1}^N (\alpha_n-\alpha_{n-1})=\lim_N \alpha_N.$$

And, since the sequence $\{\alpha_n\}_{n\in\N}$ is bounded, it holds the above limit is finite, so $[T^{*},T]$ is a trace-class operator. This implies $[R^{*},R]$ is trace-class.

Consequently, $[R^{*},R]$ is also a trace-class operator with $\text{tr}([T^{*},T])=\text{tr}([R^{*},R])>0$, and since $R$ is tridiagonal, Corollary \ref{cor3tridiagonal} implies that there exists a complex number $\lambda$ such that $\text{ind}(R-\lambda I)=-1$. Consequently, $\text{ind}(T-\lambda I)=-1$ and therefore $T$ has nontrivial closed hyperinvariant subspaces.
\end{proof}

Using this last theorem we can deduce the existence of hyperinvariant subspaces for those hyponormal operators having an \emph{eventually tridiagonal matrix} with respect to an orthonormal basis $\mathcal{E}$:

\begin{corollary}\label{cor2tridiagonal}
Let $T\in \EL(\HH)\setminus \mathbb{C}\Id$ be a hyponormal operator. Assume that there exists an orthonormal basis $\mathcal{E}=\{e_n\}_{n\in\N}$ such that
$$\langle T e_n, e_{n+ \ell}\rangle = \langle T^{*} e_n, e_{n+ \ell}\rangle =0$$
for every $\ell\geq 2$ and $n\geq n_0$ for some $n_0\in\N$. Then $T$ has  nontrivial closed hyperinvariant subspaces.
\end{corollary}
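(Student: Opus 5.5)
The plan is to reduce Corollary \ref{cor2tridiagonal} to the preceding theorem by splitting $T$ as a tridiagonal operator plus a finite-rank (hence trace-class) perturbation. Fix $n_0$ as in the statement and put $N=n_0+1$, so that every index $\ge n_0$ is covered. Let $P_N$ be the orthogonal projection onto $\Span\{e_1,\dots,e_N\}$.

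First we identify which matrix entries $a_{mn}=\langle Te_n,e_m\rangle$ can be nonzero off the tridiagonal band.
\begin{itemize}
\item The hypothesis $\langle Te_n,e_{n+\ell}\rangle=0$ for $\ell\ge 2$ and $n\ge n_0$ says that $a_{mn}=0$ whenever $m\ge n+2$ and $n\ge n_0$.
\item The hypothesis $\langle T^{*}e_n,e_{n+\ell}\rangle=\overline{\langle Te_{n+\ell},e_n\rangle}=0$ says that $a_{n,n+\ell}=0$ whenever $\ell\ge2$ and $n\ge n_0$.
\end{itemize}
Consequently, every nonzero entry outside the band $|m-n|\le 1$ has its smaller index $<n_0$. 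Such entries lie either in the first $n_0-1$ rows or in the first $n_0-1$ columns.

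Define $R$ to be the operator whose matrix agrees with $(a_{mn})$ on the band $|m-n|\le1$ and is zero elsewhere, and set $V=T-R$. By the previous paragraph, $V$ is supported on the first $n_0-1$ rows and columns, so
$$V=P_{n_0-1}V+VP_{n_0-1}-P_{n_0-1}VP_{n_0-1}.$$
Each of these terms has rank at most $n_0-1$, so $V$ is finite rank and in particular trace-class.

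It remains to check that $R$ is bounded, which is the only mildly delicate point. The diagonal, subdiagonal and superdiagonal of $R$ are the bounded sequences $\lambda_n=\langle Te_n,e_n\rangle$, $w_n=\langle Te_n,e_{n+1}\rangle$ and $\overline{\langle Te_{n+1},e_n\rangle}$. Each is bounded by $\|T\|$, so $R=D_\Lambda+S_\Omega+S_\Gamma^{*}$ is a bounded tridiagonal operator with respect to $\mathcal{E}$.

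We now have $T=R+V$ with $R$ tridiagonal with respect to $\mathcal{E}$ and $V$ trace-class, while $T$ is hyponormal and not a multiple of the identity. The preceding theorem therefore applies and yields nontrivial closed hyperinvariant subspaces for $T$. I do not expect a genuine obstacle here; the main care is only in the index bookkeeping that confines $V$ to finitely many rows and columns.
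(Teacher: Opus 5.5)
Your proof is correct and follows the paper's route: write $T$ as a tridiagonal operator plus a finite-rank (hence trace-class) remainder, then invoke the preceding theorem. The difference is cosmetic. The paper takes $R=0\oplus\tilde T_\Lambda$, the compression of $T$ to $\overline{\Span\{e_n:n\ge n_0\}}$, which is automatically bounded, and puts the whole corner $\begin{pmatrix}A&B\\C&0\end{pmatrix}$ into $V$. You instead keep the full band in $R$, which is why you need your boundedness check (correctly done). Also, your $P_N$ with $N=n_0+1$ is never used; the projection you actually need is $P_{n_0-1}$.
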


\medskip

\begin{proof}
Note that if $n_0=1$, $T$ is a tridiagonal operator, and if $n_0\geq 2$, the matrix representation of $T$ with respect to the decomposition  $\HH=\Span\{e_1,\dots,e_{n_0-1}\}\oplus\overline{\Span\{e_n:n\geq n_0\}}$ is given by:

$$T=\begin{pmatrix}
A & B\\
C & \tilde{T}_\Lambda
\end{pmatrix},$$
so $T=R+V$ with
$$R=\begin{pmatrix}
0 & 0\\
0 & \tilde{T}_\Lambda
\end{pmatrix} \quad  V= \begin{pmatrix}
A & B\\
C & 0
\end{pmatrix}.$$

Then we can apply the previous result since $R$ is a tridiagonal operator and $V$ is trace-class (notice $A$,$B$ and $C$ are finite-rank operators).
\end{proof}

\medskip

\begin{remark}
It is worthy to point out that given any linear bounded operator $T$ with a cyclic vector $x$, upon applying the  Gram-Schmidt procedure to the sequence $\{T^n x:n\geq 0\}$ yields an orthonormal basis $\{e_n\}_{n\in\N}$ of $\HH$ such that $T$ admits a matrix representation ``almost upper triangular'', namely,
$$
\langle T e_n, e_{n+k} \rangle=0 \text{ for all } k\geq 2,
$$
(see \cite{RR}). In particular, $T$ can be expressed as the sum of a tridiagonal operator $R$ and a linear bounded operator $V$ whose matrix representation is upper triangular with zero entries in both the main diagonal and superdiagonal. Consequently, if $T$ is hyponormal and $V$ trace-class, $T$ has nontrivial closed hyperinvariant subspaces.
\end{remark}

\medskip

\subsection{Point spectrum of hyponormal tridiagonal operators}
Note that previous results ensure that for every tridiagonal operator $T$ which is  non-normal hyponormal  (or even for every \emph{eventually tridiagonal} operator being non-normal hyponormal), there exists some complex $\lambda$ in the point spectrum of $T^{*}$. Moreover, it holds that the range $\ran(T-\lambda I)$ is closed and
$$\dim\ker(T^{*}-\overline{\lambda} I)=\dim\ker(T-\lambda I)+1.$$

\smallskip

However, we wonder if $\ker(T-\lambda I)$ is also nontrivial. The next result shows that this subspace is always trivial whenever the subdiagonal of the matrix representation of $T$ has no zero entries, and in such a case, clearly $\ker(T^{*}-\overline{\lambda} I)$ is one-dimensional.

\smallskip

\begin{proposition}
Let $T\in \EL(\HH)$ be a tridiagonal operator with respect to a basis $\mathcal{E}$ and let $(a_{mn})_{m, n\in \mathbb{N}}$ be the matrix representation of $T$. Assume $T$ is a non-normal hyponormal operator. If $a_{n+1, n}\neq 0$ for every $n\in\N$, then the point spectrum $\sigma_p(T)=\emptyset$.
\end{proposition}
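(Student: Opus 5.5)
The plan is to argue by contradiction. Suppose $\mu\in\sigma_p(T)$ and $0\neq x=\sum_n x_ne_n$ satisfies $Tx=\mu x$. Since $T$ is hyponormal, $\|(T^{*}-\overline{\mu})y\|\le\|(T-\mu)y\|$ for every $y$, so also $T^{*}x=\overline{\mu}x$. As in the proof of Theorem~\ref{theo1tridiagonal}, I first conjugate by a diagonal unitary $Ue_n=s_ne_n$, $|s_n|=1$. This changes neither the point spectrum nor hyponormality, and lets me assume $w_n>0$ for all $n$; here the hypothesis $a_{n+1,n}\ne0$ is used. Writing out the coordinates of $(T-\mu)x=0$ and $(T^{*}-\overline\mu)x=0$ gives two three-term recurrences, valid for all $n$ (with $x_0=0$):
\begin{align*}
&\text{(A)}\quad \gamma_{n-1}x_{n-1}+(\overline{\lambda_n}-\overline{\mu})x_n+w_n x_{n+1}=0,\\
&\text{(B)}\quad w_{n-1}x_{n-1}+(\lambda_n-\mu)x_n+\overline{\gamma_n}\,x_{n+1}=0 .
\end{align*}
Because $w_n\neq0$, (A) determines $x$ from $x_1$, so $x_1\neq0$. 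Likewise (B) run backwards shows that two consecutive zero coordinates force $x=0$.

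Next I eliminate between (A) and (B) to obtain first-order relations. Eliminating $x_{n-1}$ gives
\[
(w_{n-1}w_n-\gamma_{n-1}\overline{\gamma_n})x_{n+1}=-\bigl(w_{n-1}(\overline{\lambda_n}-\overline\mu)-\gamma_{n-1}(\lambda_n-\mu)\bigr)x_n .
\]
Eliminating $x_{n+1}$ gives
\[
(w_{n-1}w_n-\gamma_{n-1}\overline{\gamma_n})x_{n-1}=\bigl(\overline{\gamma_n}(\overline{\lambda_n}-\overline\mu)-w_n(\lambda_n-\mu)\bigr)x_n .
\]
By Proposition~\ref{lemma1tridiagonal} and the proof of Theorem~\ref{theo1tridiagonal}, $w_n\to w>0$, $|\gamma_n|\to\gamma<w$, and $\gamma_{n+1}-\gamma_n\to0$. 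Hence the common leading coefficient $c_n=w_{n-1}w_n-\gamma_{n-1}\overline{\gamma_n}$ satisfies $|c_n|\ge w_{n-1}w_n-|\gamma_{n-1}||\gamma_n|\ge\delta>0$ for large $n$. Put $X_n=w_{n-1}(\overline{\lambda_n}-\overline\mu)-\gamma_{n-1}(\lambda_n-\mu)$ and $Y_n=\overline{\gamma_n}(\overline{\lambda_n}-\overline\mu)-w_n(\lambda_n-\mu)$. The key observation is that $\overline{Y_n}=\gamma_n(\lambda_n-\mu)-w_n(\overline{\lambda_n}-\overline\mu)$ is, up to sign and the index shifts in $\gamma,w$, the same quantity as $X_n$. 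Therefore $|X_n|$ and $|Y_n|$ agree up to an error controlled by $|\gamma_n-\gamma_{n-1}|+|w_n-w_{n-1}|$.

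Combining the two relations at consecutive indices gives
\[
\frac{|x_{n+1}|}{|x_n|}=\frac{|X_n|}{|c_n|}\quad\text{and}\quad\frac{|x_{n+1}|}{|x_n|}=\frac{|c_{n+1}|}{|Y_{n+1}|}.
\]
Multiplying these, $\bigl(|x_{n+1}|/|x_n|\bigr)^2=\frac{|X_n|\,|c_{n+1}|}{|c_n|\,|Y_{n+1}|}$, and the right-hand side is $1+\varepsilon_n$ for a small error $\varepsilon_n$. The case in which $X_n=0$ or $Y_{n+1}=0$ infinitely often has to be treated separately: one of the two relations then forces two consecutive zeros, hence $x=0$. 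If $\prod(1+\varepsilon_n)$ stays bounded away from $0$, then $|x_n|\ge c>0$ eventually, which contradicts $x\in\ell^2$. That finishes the proof, and it also yields that $\ker(T^{*}-\overline\lambda I)$ is one-dimensional.

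The main obstacle is this last step: showing that $\sum|\varepsilon_n|<\infty$, since a ratio merely tending to $1$ does not exclude decay. I would try to deduce it from the $\ell^1$ information already extracted in the proof of Proposition~\ref{lemma1tridiagonal}: $\{\lambda_{n+1}-\lambda_n\}\in\ell^1$, $\{\gamma_{n+1}w_n-w_{n+1}\gamma_n\}\in\ell^1$, $\{|\gamma_{n+1}|^2-|\gamma_n|^2\}\in\ell^1$, and $\{\alpha_{n}-\alpha_{n-1}\}\in\ell^1$. Since $w_n>0$ has now been arranged, these give $\{w_{n+1}-w_n\}\in\ell^1$ and $\{\gamma_{n+1}-\gamma_n\}\in\ell^1$ (using $w_n\ge w/2$ eventually). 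If this works, $X_n$, $Y_{n+1}$, $c_n$, $c_{n+1}$ all have $\ell^1$-summable increments. Then the ratio above is $1+O(\text{summable})$ wherever $|X_n|$ stays bounded below, and if $|X_n|\to0$ along a subsequence the first relation forces $|x_{n+1}|\ll|x_n|$, which I would rule out by the same modulus identity.
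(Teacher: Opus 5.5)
Your argument is correct, and the step you leave conditional does go through. Write $\lambda:=\lim_n\lambda_n$. With $w_n>0$, the facts you cite give first $\{w_{n+1}-w_n\}_n\in\ell^1$ and then $\{\gamma_{n+1}-\gamma_n\}_n\in\ell^1$. So all coefficients converge, with $\gamma_n\to\gamma_\infty$, $|\gamma_\infty|=\gamma<w$. Hence $X_n\to X_\infty$ with $|X_\infty|\ge(w-\gamma)|\lambda-\mu|$, and the ``subsequence'' case is simply the case $\mu=\lambda$.

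\emph{Case $\mu\neq\lambda$.} Here $|X_n|$, $|Y_{n+1}|$ and $|c_n|$ are bounded below. Also $x_n\neq0$ for all large $n$, because $c_nx_{n+1}=-X_nx_n$ turns a zero into a consecutive pair. Then $\sum_n|\varepsilon_n|<\infty$ and $|x_n|$ tends to a positive limit.

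\emph{Case $\mu=\lambda$.} Multiplying your two first-order relations gives $(c_nc_{n+1}+X_nY_{n+1})\,x_nx_{n+1}=0$, with $|X_nY_{n+1}|\to0<(w^2-\gamma^2)^2=\lim_n|c_nc_{n+1}|$. So $x_nx_{n+1}=0$ for large $n$, and the two relations again turn one zero into two consecutive ones.

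This is a genuinely different route from the paper's. The paper builds one family of polynomials $p_n$ with $\prod_{j<n}w_j\,e_n=p_n(T)e_1$ and $\prod_{j<n}\gamma_j\,e_n=\overline{p_n}(T^{*})e_1$. Pairing with $x$ and using $Tx=\mu x$, $T^{*}x=\overline{\mu}x$ gives $\overline{\prod_{j<n}w_j}\,x_n=\overline{p_n(\mu)}\,x_1$ and $\overline{\prod_{j<n}\gamma_j}\,x_n=p_n(\mu)\,x_1$. This yields the exact identity $\bigl(\prod_{j<n}|w_j|-\prod_{j<n}|\gamma_j|\bigr)|x_n|=0$ for every $n$. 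Since $|w_j|\ge|\gamma_j|$ for all $j$, strictly from $n_0$ on, $x_n=0$ for $n>n_0$, and backward recursion through the nonzero subdiagonal gives $x=0$.

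In your notation, put $u_n=\prod_{j<n}w_j\,x_n$ and $v_n=\prod_{j<n}\overline{\gamma_j}\,x_n$. By (A) and (B) they satisfy complex-conjugate three-term recurrences with the same starting value $x_1$ (recall $x_0=0$), so $|u_n|=|v_n|$. Your approximate relation $|X_n|\approx|Y_n|$ is the asymptotic shadow of this exact symmetry.

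The paper's argument is purely algebraic. Of the commutator it uses only the diagonal (monotonicity of $\alpha_n$ and $\alpha_{n_0}>0$). It needs no convergence of coefficients and no decay of $(x_n)$, and it shows outright that a joint eigenvector would be finitely supported. Yours needs the full asymptotic content of Proposition~\ref{lemma1tridiagonal}, the extra $\ell^1$ upgrades, and the square-summability of $x$. What it buys is a localization of the difficulty. The limiting compatibility condition $|X_\infty|=w^2-\gamma^2$ forces $\mu$ onto the ellipse $\lambda+\{we^{i\theta}+\overline{\gamma_\infty}e^{-i\theta}:\theta\in[0,2\pi]\}$, which is $\sigma_e(T)$. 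Only there is the non-decay argument actually needed; off the ellipse, the product relation alone forces consecutive zeros.
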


\medskip

\begin{proof}
We will argue by contradiction. But, first, we deal with some preliminaries. Let $\mathcal{E}=\{e_n\}_{n\in \mathbb{N}}$ and write, as usual,
$$T=D_\Lambda+S_\Omega+S_\Gamma^{*}$$
where $\lambda_n = a_{nn}$, $w_n=a_{n+1, n}$ and $\gamma_n=\overline{a_{n,n+1}}$ for  $n\in \mathbb{N}$.

\smallskip

A proof by induction yields that there exists a sequence of complex polynomials $\{p_m(z)\}_{m\in\N}$ such that
$$\left(\prod_{j=1}^{m-1}w_j\right) e_m=p_m(T)e_1 \quad m\geq 2$$
with
$$p_{m+1}(z)=(z-\lambda_m)p_m(z) - \overline{\gamma_{m-1}}w_{m-1}p_{m-1}(z) \quad m\geq 2 $$
$$p_1(z)=1, \quad p_2(z)=z-\lambda_1.$$

\smallskip

Since $T^{*}$ is also a tridiagonal operator, there exists a sequence $\{q_m(z)\}_{m\in\N}$ of polynomials such that
$$\left(\prod_{j=1}^{m-1}\gamma_j\right) e_m = q_m(T^{*})e_1 \quad m\geq 2$$
with
$$q_{m+1}(z)=(z-\overline{\lambda_m})q_m(z) - \overline{w_{m-1}}\gamma_{m-1}q_{m-1}(z) \quad m\geq 2 $$
$$q_1(z)=1, \quad p_2(z)=z-\overline{\lambda_1}.$$

\smallskip

For the sake of simplicity, given any complex polynomial $r(z)=\sum_j \beta_j z^j$, let us denote $\overline{r}(z)=\sum_j \overline{\beta_j}z^j$. Clearly
$q_m(z)=\overline{p_m}(z)$  for every $m\in\N$.

\smallskip

Now, let us assume that $\sigma_p(T)\neq \emptyset$. Then $Tx=\lambda x$ for some $\lambda\in\C$ and $x\neq 0$ and, in addition, $T^{*}x=\overline{\lambda} x$ since $T$ is hyponormal.

\smallskip

Without loss of generality, we may suppose that $\lambda \neq 0$. Indeed, if $Tx=0$, it holds that $(T+\alpha I)x=\alpha x$ for every $\alpha\neq 0$ and $T+\alpha I$ is a hyponormal tridiagonal operator with (obviously) non-zero entries in the subdiagonal of its matrix representation.

\smallskip

Write $x=\sum_n x_n e_n$ and consider the sequences:
$$b_n=\left \{\begin{array}{ll}
1 & n=1\\
\prod_{j=1}^{n-1} w_j & n\geq 2,
\end{array} \right. $$
and
$$c_n=\left \{\begin{array}{ll}
1 & n=1\\
\prod_{j=1}^{n-1} \gamma_j & n\geq 2.
\end{array} \right. $$

\smallskip

Note that
$$\overline{b_n} x_n=\overline{b_n}\langle x,e_n\rangle =\langle x,p_n(T)e_1\rangle=\langle \overline{p_n}(T^{*})x,e_1\rangle =\overline{p_n}(\overline{\lambda})x_1 \quad n\in\N,$$
$$\overline{c_n} x_n=\overline{c_n} \langle x,e_n\rangle = \langle x, \overline{p_n}(T^{*})e_1\rangle=\langle p_n(T)x,e_1\rangle=p_n(\lambda)x_1 \quad n\in\N.$$

Since $T$ is a non-normal operator, arguing as in Proposition \ref{lemma1tridiagonal}, there exists $n_0\in\N$ such that $\alpha_{n_0}>0$ and $\alpha_n\geq \alpha_{n_0}>0$ for every $n\geq n_0$, where $\alpha_n$ is the sequence given in \eqref{sequence a_n}. Then $|b_n|>|c_n|$ for every $n\geq n_0$. Since $|p_n(z)|=|\overline{p_n}(\overline{z})|$ for every $n\in\N$ and every $z\in\C$, it holds that $x_n=0$ for every $n\geq n_0$.

\smallskip

Then, if $x_{n_0-1}\neq 0$, it follows that
$$\langle Tx,e_{n_0}\rangle = x_{n_0-1}w_{n_0-1}\neq 0$$
but it must hold $\langle Tx,e_{n_0}\rangle  =\lambda x_{n_0}=0$ (and we had $\lambda\neq 0$). Clearly, this is a contradiction. Proceeding by induction we get $x_n=0$ for every $n\in\N$ and hence, $x$ is the zero vector. Accordingly, $\sigma_p(T)=\emptyset$, as claimed.
\end{proof}

\section{Hyponormal operators as compact perturbations of normal operators}

As stated in the Introduction, when studying invariant subspaces of a hyponormal operator \( T \), we may, without loss of generality, assume that \( T \) can be expressed as \( T = N + K \), where \( N \) is normal and \( K \) is compact. Moreover, by Berger and Shaw theorem, we may also assume that the self-commutator $[T^{*},T]$ is trace-class. Accordingly, it turns out to be natural to force the compact operator to have some growth condition over its singular values in order to obtain properties over the hyponormal operator $T$.
For example, $T$ is necessarily normal when $K$ is in the Hilbert-Schmidt class. In particular, the following proposition was proved by Lauric and Pearcy (see \cite{lauricpearcy}).

\begin{proposition}
A hyponormal operator $T$ is normal if and only if it can be written as $T=D+K$ where $D$ is a diagonal operator and $K$ is a Hilbert-Schmidt operator.
\end{proposition}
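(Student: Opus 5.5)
One direction is immediate: if $T$ is normal, the Weyl--von Neumann--Berg theorem lets us write $T=D+K$ with $D$ diagonal and $K$ compact of arbitrarily small Hilbert--Schmidt norm. (In fact any normal operator is diagonal plus Hilbert--Schmidt by Voiculescu's refinement; for the statement it is enough to produce some Hilbert--Schmidt $K$.) The substance is therefore the converse: a hyponormal $T=D+K$ with $D$ diagonal and $K\in\mathcal{C}_2$ must be normal.

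For that direction, let $\{e_n\}$ be the basis diagonalizing $D$, with $De_n=d_ne_n$. Let $P_N$ be the projection onto $\Span\{e_1,\dots,e_N\}$. The plan is to show that $\operatorname{tr}(P_N[T^*,T]P_N)\to 0$. Since $[T^*,T]\ge 0$, this forces the positive operator $[T^*,T]$ to have trace zero, hence $[T^*,T]=0$.

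Expanding $[T^*,T]=[D^*,K]+[K^*,D]+[K^*,K]$ (using $[D^*,D]=0$), we handle the terms separately.

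\textbf{Mixed terms.} Since $D$ commutes with $P_N$, we have
\[
\operatorname{tr}(P_N(D^*K-KD^*)P_N)=\operatorname{tr}(P_ND^*KP_N)-\operatorname{tr}(P_NKD^*P_N)=0,
\]
because $P_ND^*=D^*P_N$ and the trace is cyclic on finite-rank operators. The term $[K^*,D]$ vanishes the same way.

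\textbf{The term $[K^*,K]$.} This is the main obstacle. Compute
\[
\operatorname{tr}(P_N(K^*K-KK^*)P_N)=\|KP_N\|_2^2-\|K^*P_N\|_2^2=\|(I-P_N)KP_N\|_2^2-\|P_NK(I-P_N)\|_2^2,
\]
where both sides have the common part $\|P_NKP_N\|_2^2$ cancelled. Both remaining terms are bounded by $\|(I-P_N)K\|_2^2+\|K(I-P_N)\|_2^2$, which tends to $0$ precisely because $K$ is Hilbert--Schmidt. This is the step where the $\mathcal{C}_2$ hypothesis is essential; for general compact $K$ the argument fails, consistent with the non-normal examples the paper announces.

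\textbf{Conclusion.} Putting the pieces together, $\operatorname{tr}(P_N[T^*,T]P_N)=\sum_{n\le N}\langle[T^*,T]e_n,e_n\rangle\to 0$. The series has nonnegative terms, so every diagonal entry $\langle[T^*,T]e_n,e_n\rangle$ is $0$. For a positive operator $A$, $\langle Ae_n,e_n\rangle=\|A^{1/2}e_n\|^2$, so $A^{1/2}e_n=0$ for all $n$ and hence $A=0$. Thus $[T^*,T]=0$ and $T$ is normal.
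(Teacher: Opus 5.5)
Your proof is correct and is essentially the paper's argument. Necessity comes from Voiculescu's Hilbert--Schmidt refinement; note that Berg's theorem alone only gives a compact perturbation of small operator norm, so it is Voiculescu's result you actually need. Sufficiency follows because, in the basis diagonalizing $D$, the mixed terms have zero diagonal entries, so the nonnegative diagonal entries of $[T^*,T]$ sum to $\operatorname{tr}(K^*K)-\operatorname{tr}(KK^*)=0$; your finite-section computation with $P_N$ just unpacks the paper's one-line appeal to $\operatorname{tr}(K^*K)=\operatorname{tr}(KK^*)=\|K\|_2^2$.
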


\begin{proof}
The necessity is just a consequence of a remarkable result by Voiculescu \cite{voiculescu} (see also \cite{davidson}), which states that every normal operator can be expressed as the sum of a diagonal operator and a Hilbert-Schmidt one.

For the converse, let us assume that $T=D_\Lambda + K$ where $D_\Lambda$ is a diagonal operator with respect to an orthonormal basis $\{e_n\}_{n\in\N}$ and $K$ is a Hilbert-Schmidt operator. The following argument relies on one given in \cite{kimlee}.

Note that
\begin{align*}
[T^{*},T]&=(D_\Lambda+K)^{*}(D_\Lambda+K)-(D_\Lambda+K)(D_\Lambda+K)^{*}=\\
&=(K^{*}D_\Lambda-D_\Lambda K^{*})+(D_\Lambda^{*}K-K D_\Lambda^{*})+[K^{*},K].
\end{align*}

Let $\{\lambda_n\}_{n\in\N}$ be the sequence of complex eigenvalues of $D_\Lambda$ with respect to the orthonormal basis $\{e_n\}_{n\in\N}$. Then
$$\langle [T^{*},T]e_n,e_n\rangle = \langle [K^{*},K] e_n,e_n\rangle.$$

Since the operator $K$ is Hilbert-Schmidt, it follows that both operators $K^{*}K$ and $KK^{*}$ are trace class and they have equal trace. This implies
$$\sum_{n\in\N} \langle[T^{*},T]e_n,e_n\rangle = \text{tr}(K^{*}K)-\text{tr}(KK^{*})=0$$
and, since $[T^{*},T]\geq 0$, we have $[T^{*},T]=0$. Accordingly, $T$ is normal, as we claimed.
\end{proof}

As a consequence, we have the following:

\begin{corollary}\label{corpolynomialhyponormal}
Let $T\in \EL(\HH)\setminus \mathbb{C}\Id$  and suppose $T=N+K$ with $N$ normal and $K$ a Hilbert-Schmidt operator. Assume further that there exists some polynomial $p(t)$ such that $p(T)$ is hyponormal. Then $T$ has nontrivial closed  hyperinvariant subspaces.
\end{corollary}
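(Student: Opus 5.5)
The plan is to show that $p(T)$ is a Hilbert-Schmidt perturbation of a normal operator, so that, being hyponormal, it is normal by the preceding proposition. We then pass from $p(T)$ back to $T$.

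\emph{Step 1.} Write $T=N+K$ and expand $p(T)=p(N+K)$. Every monomial $(N+K)^j$ equals $N^j$ plus a finite sum of words in $N$ and $K$, each containing at least one factor $K$. Since the Hilbert-Schmidt class is a two-sided ideal, each such word is Hilbert-Schmidt. Hence $p(T)=p(N)+K'$ with $K'$ Hilbert-Schmidt, and $p(N)$ is normal.

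\emph{Step 2.} By Voiculescu's theorem, as used in the proof of the preceding proposition, $p(N)=D+K''$ with $D$ diagonal and $K''$ Hilbert-Schmidt. So $p(T)=D+(K'+K'')$. Since $p(T)$ is hyponormal, the Lauric--Pearcy proposition gives that $p(T)$ is normal.

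\emph{Step 3.} First suppose $p(T)\notin\C\Id$. Then $p(T)$ is a normal non-scalar operator, so its spectral measure $E$ has a spectral projection $E(\Delta)\neq 0,\Id$ for some Borel set $\Delta$. Every $A\in\{T\}'$ commutes with $p(T)$. By Fuglede's theorem, $A$ then commutes with every spectral projection of $p(T)$. Therefore $\ran E(\Delta)$ is a nontrivial closed subspace invariant under $\{T\}'$, i.e.\ a hyperinvariant subspace for $T$.

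Now suppose $p(T)=c\Id$, so $T$ is algebraic. Then its minimal polynomial $m$ has a root $\mu$, and $\ker(T-\mu\Id)$ is nonzero. This kernel is hyperinvariant, and it is not all of $\HH$ because $T\neq\mu\Id$. (If $p$ is constant, we are trivially in this case only when $p(T)$ is hyponormal, and the argument is the same provided $p$ is nonconstant. We should assume $p$ nonconstant, since a constant $p$ gives no information; the statement implicitly requires this.)

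The main obstacle I expect is the degenerate case of a constant $p$. I would handle it by noting that the statement must tacitly assume $\deg p\geq 1$. The rest is routine: ideal properties, Fuglede's theorem, and the earlier proposition.
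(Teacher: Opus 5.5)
Your proof is correct and follows the paper's argument. You show that $p(T)$ is a Hilbert--Schmidt perturbation of a diagonal operator, hence normal by the Lauric--Pearcy proposition, and then use an eigenspace when $p(T)$ is scalar and the inclusion $\{T\}'\subseteq\{p(T)\}'$ otherwise; the paper applies Voiculescu's theorem to $N$ before forming $p(T)$ rather than to $p(N)$ afterwards, which makes no difference. You are also right that $p$ must tacitly be nonconstant, since the paper's step ``$T$ has eigenvalues'' in the scalar case relies on this as well.
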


Before proving Corollary \ref{corpolynomialhyponormal}, it is worthy to point out that the class of polynomially hyponormal operators has called the attention of operator theorists in the last decades. Recall that an operator $T\in\EL(\HH)$ is called \textit{polynomially hyponormal} if $p(T)$ is hyponormal for every complex polynomial $p(z)$. It was proved by B. Prunaru that every polynomially hyponormal operator has nontrivial closed invariant subspaces. We refer to \cite{curtoputinar} for further properties and references regarding polynomially hyponormal operators.

\medskip

\medskip

\begin{demode}\emph{Corollary \ref{corpolynomialhyponormal}.}
Note that $p(T)=p(D)+K'$ where $D$ is diagonal and $K'$  is Hilbert-Schmidt. So $p(T)$ is normal. In the case $p(T)$ is a scalar multiple of the identity, then $T$ has eigenvalues. On the other hand, if $p(T)$ is not a multiple of the identity operator, the result follows from the fact $\{T\}'\subseteq\{p(T)\}'$.
\end{demode}

\medskip

 The next result shows that if $T$ is hyponormal and $T=N+K$ with both $N, K$  normal operators, and $K$ compact, then $T$ is indeed normal:

\begin{proposition}
Let $T\in \EL(\HH)$ a hyponormal operator and suppose $T=N+K$ with $N, K$ normal operators and $K$ compact. Then $T$ is normal.
\end{proposition}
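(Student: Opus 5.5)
The plan is to diagonalize the compact normal operator $K$ and then mimic the Lauric--Pearcy argument above, with a truncation step replacing the Hilbert--Schmidt hypothesis. By the spectral theorem for compact normal operators, $K=\sum_n \mu_n P_n$, where the $\mu_n\to 0$ are the nonzero eigenvalues and each $P_n$ is a finite-rank spectral projection. Completing with an orthonormal basis of $\ker K$, we obtain an orthonormal basis $\{e_n\}$ in which $K=D_M$ is diagonal with diagonal sequence $M=\{\mu_n\}$ (zeros allowed).

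Next I would expand the self-commutator as in the previous proof, with the roles of $N$ and $K$ interchanged:
$$[T^{*},T]=[N^{*},N]+(N^{*}K-KN^{*})+(K^{*}N-NK^{*})+[K^{*},K]=(N^{*}K-KN^{*})+(K^{*}N-NK^{*}),$$
since both $N$ and $K$ are normal. Because $K$ is diagonal in $\{e_n\}$, the diagonal entries of the bracket terms vanish:
$$\langle (N^{*}K-KN^{*})e_n,e_n\rangle=\mu_n\langle N^{*}e_n,e_n\rangle-\mu_n\langle N^{*}e_n,e_n\rangle=0,$$
and similarly for the other term. Hence $\langle [T^{*},T]e_n,e_n\rangle=0$ for every $n$. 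Since $[T^{*},T]\ge 0$ by hyponormality, each $[T^{*},T]^{1/2}e_n=0$. This gives $[T^{*},T]=0$, so $T$ is normal.

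This argument does not actually need trace-class estimates or compactness beyond diagonalizability, so the only point requiring care is the diagonalization in the first step. Compactness and normality of $K$ are exactly what guarantee an orthonormal eigenbasis: the closed span of the eigenvectors reduces $K$, and $K$ vanishes on its complement, which must therefore lie in $\ker K$. The remainder is the short computation above.
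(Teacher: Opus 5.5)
Your proof is correct and follows the paper's route. You diagonalize $K$ in an orthonormal eigenbasis, check that the cross terms $N^{*}K-KN^{*}$ and $K^{*}N-NK^{*}$ have zero diagonal entries, so that $\langle [T^{*},T]e_n,e_n\rangle=\langle [N^{*},N]e_n,e_n\rangle=0$, and conclude from $[T^{*},T]\ge 0$ that $[T^{*},T]=0$ — you just make explicit the positivity step ($[T^{*},T]^{1/2}e_n=0$) that the paper leaves implicit.
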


\begin{proof}
Let $\{e_n\}_{n\in\N}$ be an orthonormal basis of eigenvectors of $K$. By computing $\langle [T^{*},T]e_n,e_n\rangle$, it follows that
$$\langle [T^{*},T]e_n,e_n\rangle=\langle [N^{*},N]e_n,e_n\rangle \quad n\in\N.$$
Since $N$ is normal, it holds that $[T^{*},T]=0$, and hence, $T$ is also normal.
\end{proof}

\medskip

\medskip

These results motive an strategy for the invariant subspace problem of hyponormal operators included in Question 4.1 \cite{kimlee}:

Suppose $T\in\EL(\HH)$ satisfy the following:
\begin{enumerate}
    \item $T$ can be written as a compact perturbation of a diagonal operator;
    \item $T$ is hyponormal;
    \item $[T^{*},T]$ is a trace-class operator.
\end{enumerate}

\noindent Kim and Lee asked the following:

\medskip

\medskip

\begin{quote}
Does it follow that either $T$ is normal or $[K^{*},K]$ is a trace-class operator, where $T=D+K$ for $D$ diagonal and $K$ compact?
\end{quote}

\medskip

The answer to this question is, in general, negative. Indeed,  first note that any hyponormal operator verifying the conditions (1)-(3) above such that $[K^{*},K]$ is trace-class is necessarily normal (see, for instance, \cite[Proposition 4.2]{kimlee} or \cite[Proposition 1.4]{laurictracezero}).

Accordingly, given any non-normal hyponormal operator $R$ with trace-class self-commutator, we can consider a sequence $\{\lambda_n\}_{n\in\N}$ dense in $\sigma(R)$ and a diagonal operator $D$ with eigenvalues $\{\lambda_n\}_{n\in\N}$ such that $\sigma(D)=\sigma_e(D)$ and define the operator
$$T=D\oplus R$$
on the Hilbert space $\HH\oplus\HH$. Note that such a $T$ is a non-normal hyponormal operator with spectrum $\sigma(T)=\sigma_e(T)= \sigma(R)$. Moreover, it holds that  $$\text{ind}(R-\lambda I)=0$$
for every $\lambda\in\C$ such that $T-\lambda\Id$ is a semi-Fredholm operator, so, since $T$ is essentially normal, it can be written as a compact perturbation of a diagonal operator by the Brown-Douglas-Fillmore theorem (see \cite{bdftheorem}).

\medskip

As concrete examples, we may consider $R$  be a non-normal hyponormal Toeplitz operator $T_\varphi$ in the Hardy space $H^2$ with compact self-commutator (e.g., with $\varphi\in H^\infty$, namely, $\varphi$ belonging to the Banach space consisting of all bounded analytic functions on the open unit disk $\mathbb{D}$
endowed with the supremum norm). Hyponormal Toeplitz operators were characterized by Cowen in \cite{cowen}.

\medskip

In the case we also look for $[T^{*},T]$ to be trace-class, we have to require  the Fourier coefficients $\{\hat{\varphi}(n)\}_{n\in\N}$ of $\varphi\in H^\infty$ to verify
$$\sum_{n\in\N}n|\hat{\varphi}(n)|^2<\infty.$$

\section{Reducing subspaces for hyponormal operators}

The previous sections show a clear dichotomy regarding the classes of hyponormal operators considered: either normality or non-quasitriangularity. In particular, all quasitriangular hyponormal operators $T$ studied before are reducible (i.e., there exists a nontrivial closed invariant subspace for both $T$ and $T^*$). Motivated by this fact, at this point, we are interested in characterizing the existence of reducing subspaces for hyponormal operators.

\medskip

Given an operator $T\in\EL(\HH)$, the authors introduced in \cite{clemgallard}  the class of operators
$$\mathcal{S}_T:=\{Q\in\EL(\HH):(I-Q)TQ \text{ is self-adjoint}\}$$
in order to study the existence of nontrivial closed invariant subspaces for compact perturbations of self-adjoint operators.

\medskip

This class arises naturally through the application of the Aronszajn–Smith procedure in the study of quasitriangular operators. In particular, given $T\in\EL(\HH)$ and $Q\in\mathcal{S}_T$ self-adjoint, it is straightforward that $\ker(I-Q)$ is a  (non-necessarily nontrivial) closed invariant subspace for $T$ and analogously $\ker Q$ for $T^{*}$. Moreover, if $P\in\mathcal{S}_T$ is a projection, then $(I-P)TP=0$ and $P(\HH)$ is an invariant subspace for $T$. Using this class $\mathcal{S}_T$, the authors proved that the \emph{Invariant Subspace Problem} is equivalent to show that the class $\mathcal{S}_T$ is nontrivial (in the sense that it contains more operators than just $0$ and $I$) for every $T\in\EL(\HH)$ (see \cite{clemgallard}).

\medskip

Similarly, the class $\mathcal{S}_T$ offers an alternative characterization of when a given operator $T \in \EL(\HH)$ admits nontrivial closed reducing subspaces. In particular, the existence of reducing subspaces $M$ for operator $T$ such that the imaginary part of $T$ is positive and vanishes on $M$ (studied in \cite{foias})  is completely characterized by the existence of a self-adjoint operator $Q\in\mathcal{S}_T$ with an eigenvalue different from $0$ and $1$ (see \cite{clemgallard}).
\medskip

In this section, we will show that this class plays also a key role in characterizing the existence of reducing subspaces for hyponormal operators. For such a goal, let us recall that any operator $T\in\EL(\HH)$ can be written as $T=X+iY$ where both $X$ and $Y$ are self-adjoint operators (called the real part and imaginary part of $T$, respectively):
$$X=\frac{T+T^{*}}{2} \quad Y=\frac{T-T^{*}}{2i}.$$
In what follows, we will use the notation $T^n=X_n+iY_n$ for $T\in\EL(\HH)$ and $n\in\N$, where both $X_n$ and $Y_n$ are the real and the imaginary parts of $T^n$, respectively.

\medskip

We start noting that, for a self-adjoint operator \( Q \), the condition \( Q \in \mathcal{S}_T \) can be expressed as
\[
TQ = QT^{*} + 2iQY_1Q,
\]
or, equivalently,
\[
QX_1 - X_1Q = i(QY_1 + Y_1Q - 2QY_1Q).
\]
The following result sets the stage for the proof of the main theorem of this section.

\begin{lemma}\label{lemma1}
Let $T\in\mathcal{L}(\mathcal{H})$ and let $Q\in\mathcal{S}_T$ be self-adjoint. If $QX_1=X_1Q$, then $Q\in \mathcal{S}_{X_1+i[T^{*},T]}$.
\end{lemma}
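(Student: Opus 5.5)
The plan is a direct computation: split $(I-Q)\bigl(X_1+i[T^{*},T]\bigr)Q$ into a term coming from $X_1$ and a term coming from the self-commutator, and show each is self-adjoint. We use two things throughout: $Q$ commutes with $X_1$, and the identity $QX_1-X_1Q=i(QY_1+Y_1Q-2QY_1Q)$ recorded just before the statement.

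First I would extract the information hidden in $Q\in\mathcal{S}_T$. Since $QX_1=X_1Q$, the left-hand side of that identity vanishes, so
\[
QY_1+Y_1Q=2QY_1Q.
\]
Setting $A:=(I-Q)Y_1Q$, we get $A+A^{*}=Y_1Q-QY_1Q+QY_1-QY_1Q=0$, so $A$ is skew-adjoint.

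Next comes the $X_1$-term. Because $X_1$ commutes with $Q$,
\[
(I-Q)X_1Q=X_1Q-X_1Q^2 .
\]
Each of $X_1Q$ and $X_1Q^2$ is a product of commuting self-adjoint operators, hence self-adjoint, so $(I-Q)X_1Q$ is self-adjoint.

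For the commutator term I would write $T=X_1+iY_1$ and expand, obtaining
\[
[T^{*},T]=2i(X_1Y_1-Y_1X_1), \qquad\text{so}\qquad i[T^{*},T]=2(Y_1X_1-X_1Y_1).
\]
Using $QX_1=X_1Q$ once more, the factor $X_1$ can be pulled across $Q$ and $I-Q$:
\[
(I-Q)(Y_1X_1-X_1Y_1)Q=(I-Q)Y_1Q\,X_1-X_1(I-Q)Y_1Q=AX_1-X_1A .
\]
Its adjoint is $X_1A^{*}-A^{*}X_1=-X_1A+AX_1$, the same operator, because $A^{*}=-A$ and $X_1$ is self-adjoint. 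Hence this term is self-adjoint as well.

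Adding the two self-adjoint pieces shows that $(I-Q)\bigl(X_1+i[T^{*},T]\bigr)Q$ is self-adjoint, which is exactly $Q\in\mathcal{S}_{X_1+i[T^{*},T]}$. There is no real obstacle here. The only point needing care is the sign and normalization in $[T^{*},T]=2i[X_1,Y_1]$, together with noticing that skew-adjointness of $A$ is precisely what makes the commutator $[A,X_1]$ self-adjoint.
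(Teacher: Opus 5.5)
Your proof is correct, and it takes a noticeably shorter route than the paper's. The paper works with $T$ and $T^{*}$ directly. It rewrites $Q\in\mathcal{S}_T$ as $TQ=QT^{*}+2iQY_1Q$ and uses this to expand $QT^{*}T$, $QTT^{*}$, $T^{*}TQ$ and $TT^{*}Q$. For every self-adjoint $Q\in\mathcal{S}_T$ it then obtains
\[
iQ[T^{*},T]+i[T^{*},T]Q-2iQ[T^{*},T]Q=2iT[Q,X_1]-2i[Q,X_1]T^{*}+4[QY_1Q,X_1]+4Q[X_1,Y_1]Q,
\]
and only at the end imposes $QX_1=X_1Q$ to make the right-hand side vanish.

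You pass to real and imaginary parts at once. With $i[T^{*},T]=2[Y_1,X_1]$, the hypothesis reduces $Q\in\mathcal{S}_T$ to the skew-adjointness of $A=(I-Q)Y_1Q$. The compressed commutator is then $2[A,X_1]$, which is self-adjoint for the elementary reason that $A^{*}=-A$ and $X_1^{*}=X_1$.

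The paper's route gives a formula that holds without the commutation hypothesis and expresses the obstruction through $[Q,X_1]$. Your route shows the mechanism behind the lemma and saves a long expansion. The two are easy to reconcile. If you keep $D=[Q,X_1]$ in your computation instead of setting it to zero, and use $A+A^{*}=-iD$ (which is exactly $QX_1-X_1Q=i(QY_1+Y_1Q-2QY_1Q)$), the same decomposition reproduces the paper's general identity in a few lines.
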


\begin{proof}
Since $Q\in\mathcal{S}_T$, it holds that

\begin{eqnarray*}
QT^{*}T&=&TQT-2iQY_1QT \\
&=&T(TQ+2iQY_1-2iQY_1Q)-2iQY_1(TQ+2iQY_1-2iQY_1Q)\\
&=&T^2Q+2iTQY_1-2iTQY_1Q-2iQY_1TQ+4QY_1QY_1-4QY_1QY_1Q,\\
\noalign{\medskip}
\noalign{and}
\noalign{\medskip}
QTT^{*}&=&(TQ+2iQY_1-2iQY_1Q)T^{*}\\
&=& T(TQ-2iQY_1Q)+2iQY_1T^{*}-2iQY_1(TQ-2iQY_1Q)\\
&=&T^2Q-2iTQY_1Q+2iQY_1T^{*}-2iQY_1TQ-4QY_1QY_1Q.
\end{eqnarray*}

\medskip

\noindent Likewise,
\begin{eqnarray*}
T^{*}TQ=(QT^{*}T)^{*}&=&QT^{*2}-2iY_1QT^{*}+2iQY_1QT^{*}+2iQT^{*}Y_1Q-4QY_1QY_1Q+\\
&&4Y_1QY_1Q,\\
TT^{*}Q=(QTT^{*})^{*}&=&QT^{*2}+2iQY_1QT^{*}-2iTY_1Q+2iQT^{*}Y_1Q-4QY_1QY_1Q.
\end{eqnarray*}

\medskip

\noindent Accordingly,
\begin{eqnarray*}
iQ[T^{*},T]+i[T^{*},T]Q&=&iQT^{*}T-iQTT^{*}+iT^{*}TQ-iTT^{*}Q\\
&=&-2TQY_1+4iQY_1QY_1+2QY_1T^{*}+2Y_1QT^{*}+4iY_1QY_1Q-\\
&&2TY_1Q\\
&=&-2T(QY_1+Y_1Q-2QY_1Q)+2(QY_1+Y_1Q-2QY_1Q)T^{*}-\\
&&4X_1QY_1Q+4QY_1QX_1\\
&=&-2T(QY_1+Y_1Q-2QY_1Q)+2(QY_1+Y_1Q-2QY_1Q)T^{*}+\\
&&4[QY_1Q,X_1].
\end{eqnarray*}

\medskip

\noindent Now, a little computation yields that
\begin{align*}
iQ[T^{*},T]+i[T^{*},T]Q-2iQ[T^{*},T]Q=-&2T(QY_1+Y_1Q-2QY_1Q)+2(QY_1+\\
&Y_1Q-2QY_1Q)T^{*}+4[QY_1Q,X_1]+\\
&4Q[X_1,Y_1]Q.
\end{align*}

\medskip

\noindent Since $Q\in\mathcal{S}_T$, it follows that
\begin{align*}
iQ[T^{*},T]+i[T^{*},T]Q-2iQ[T^{*},T]Q=&2iT[Q,X_1]-2i[Q,X_1]T^{*}+\\
&4[QY_1Q,X_1]+4Q[X_1,Y_1]Q.
\end{align*}

\noindent The assumption $QX_1=X_1Q$ yields that $[QY_1Q,X_1]=-Q[X_1,Y_1]Q$ and therefore,
\begin{align*}
iQ[T^{*},T]+i[T^{*},T]Q-2iQ[T^{*},T]Q=0.
\end{align*}
Consequently, $Q\in\mathcal{S}_{X_1+i[T^{*},T]}$ as we wished to show.
\end{proof}	
	
\medskip
	
\begin{theorem}\label{theo2}
Let $T\in\mathcal{L}(\mathcal{H})$ and let $Q\in\mathcal{S}_T$ be a self-adjoint operator different from $0$ and $I$ with $QX_1=X_1Q$. Then $QY_1^2=Y_1^2Q$ and the operator $\tilde{T}=X_1+iY_1^2$ has nontrivial closed reducing subspaces.
\end{theorem}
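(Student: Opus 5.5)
The plan is to extract from $Q\in\mathcal{S}_T$ and $QX_1=X_1Q$ an algebraic identity between $Q$ and $Y_1$ alone. Then show it forces $Q$ to commute with $Y_1^2$. Finally, produce reducing subspaces for $\tilde T$ from the spectral projections of $Q$.

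\textbf{Step 1.} By the reformulation of $Q\in\mathcal{S}_T$ recorded just before Lemma \ref{lemma1}, namely $QX_1-X_1Q=i(QY_1+Y_1Q-2QY_1Q)$, the hypothesis $QX_1=X_1Q$ gives
$$QY_1+Y_1Q=2QY_1Q .$$
Equivalently,
$$Y_1Q=2QY_1Q-QY_1 \quad\text{and}\quad QY_1=2QY_1Q-Y_1Q.$$
Lemma \ref{lemma1} is not needed for this theorem, although it fits the same computation.

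\textbf{Step 2.} I will compute $Y_1^2Q$ and $QY_1^2$ by substituting these identities once:
$$Y_1^2Q=Y_1(2QY_1Q-QY_1)=2Y_1QY_1Q-Y_1QY_1,\qquad QY_1^2=(2QY_1Q-Y_1Q)Y_1=2QY_1QY_1-Y_1QY_1.$$
So it suffices to show $Y_1QY_1Q=QY_1QY_1$. Substituting again gives
$$Y_1QY_1Q=(2QY_1Q-QY_1)Y_1Q=2QY_1QY_1Q-QY_1^2Q,$$
$$QY_1QY_1=QY_1(2QY_1Q-Y_1Q)=2QY_1QY_1Q-QY_1^2Q.$$
The two expressions coincide, hence $QY_1^2=Y_1^2Q$. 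This is the only genuinely nontrivial step: $Q$ need not commute with $Y_1$ itself, only with its square.

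\textbf{Step 3.} Since $Q$ is self-adjoint and commutes with the self-adjoint operators $X_1$ and $Y_1^2$, it commutes with both $\tilde T=X_1+iY_1^2$ and $\tilde T^{*}=X_1-iY_1^2$. I split into two cases.

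If $Q$ is not a scalar multiple of $I$, its spectrum contains at least two points. Hence some spectral projection $E$ of $Q$ (for a Borel set separating two spectral points) satisfies $E\neq 0,I$. By the spectral theorem, $E$ lies in the von Neumann algebra generated by $Q$, so it commutes with everything commuting with $Q$, in particular with $\tilde T$ and $\tilde T^{*}$. Thus $E(\HH)$ is a nontrivial closed reducing subspace for $\tilde T$.

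If $Q=cI$ with $c\neq 0,1$, the identity of Step 1 reads $2c(1-c)Y_1=0$, so $Y_1=0$ and $\tilde T=X_1$ is self-adjoint. A self-adjoint operator on an infinite-dimensional space always has nontrivial reducing subspaces. If $X_1$ is scalar, every closed subspace reduces it. Otherwise a nontrivial spectral projection of $X_1$ gives one.

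The main point to get right is the double substitution in Step 2. The rest is standard spectral theory.
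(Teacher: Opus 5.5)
Your proof is correct and follows the paper's argument. In both, $QX_1=X_1Q$ reduces $Q\in\mathcal{S}_T$ to $QY_1+Y_1Q=2QY_1Q$ (that is, $Q\in\mathcal{S}_{iY_1}$), which yields $QY_1^2=Y_1^2Q$ and hence $Q\tilde T=\tilde TQ$, with the scalar case disposed of by $Y_1=0$. The only difference is that the paper cites \cite[Corollary 6.3]{clemgallard} ($Q\in\mathcal{S}_W\Rightarrow Q\in\mathcal{S}_{W^n}$) to get $Q\in\mathcal{S}_{-Y_1^2}$, whereas your double substitution verifies the needed case $n=2$ directly, and you also spell out the spectral-projection step that the paper leaves implicit.
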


\begin{proof}
First, note that if $Q$ is a multiple of the identity, then $T$ is self-adjoint (i.e., $Y_1=0$) and the result trivially follows. Accordingly, we may assume $Q$ is not a multiple of the identity operator.

\smallskip
	
Since $Q\in\mathcal{S}_T$ and $QX_1=X_1Q$, it holds $Q\in\mathcal{S}_{iY_1}$. In \cite[Corollary 6.3]{clemgallard} it was proved that any self-adjoint operator $Q\in \mathcal{S}_W$ for some $W\in\EL(\HH)$ verifies that $Q\in\mathcal{S}_{W^n}$ for every $n\in\N$. Then $Q\in S_{(iY_1)^2}=S_{-Y_1^2}$  and this implies $QY_1^2=Y_1^2Q$.
Hence $Q\tilde{T}=\tilde{T}Q$ and the result follows.
\end{proof}
	
For operators with a non-negative imaginary part we deduce the following:

\begin{corollary}\label{correducingpositiveimaginarypart}
	Let $T=X_1+iY_1$ with $Y_1\geq 0$. If $Q\in\mathcal{S}_T$ is self-adjoint and $QX_1=X_1Q$, then $Y_1Q(I-Q)=0$.
\end{corollary}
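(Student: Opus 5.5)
The plan is to combine the defining relation of $\mathcal{S}_T$ with Theorem \ref{theo2}, and then use positivity of $Y_1$ to upgrade commutation with $Y_1^2$ to commutation with $Y_1$.

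First, dispose of the trivial cases. If $Q=0$ or $Q=I$, then $Q(I-Q)=0$ and the conclusion $Y_1Q(I-Q)=0$ holds automatically. So assume $Q\neq 0, I$, so that Theorem \ref{theo2} applies.

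Next, recall the reformulation of $Q\in\mathcal{S}_T$ for self-adjoint $Q$ noted before Lemma \ref{lemma1}:
\[
QX_1-X_1Q=i\,(QY_1+Y_1Q-2QY_1Q).
\]
Since $QX_1=X_1Q$ by hypothesis, the left-hand side vanishes. Hence
\[
QY_1+Y_1Q-2QY_1Q=0 .
\]

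Then, by Theorem \ref{theo2}, $Q$ commutes with $Y_1^2$. Here the hypothesis $Y_1\geq 0$ is used. Since $Y_1$ is positive, it is the unique positive square root of $Y_1^2$. It is therefore a norm limit of polynomials in $Y_1^2$, by the continuous functional calculus applied to $t\mapsto\sqrt{t}$ on $\sigma(Y_1^2)\subset[0,\infty)$. Consequently $Q$ commutes with $Y_1$.

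Finally, substitute $QY_1=Y_1Q$ into the displayed identity. This gives $2Y_1Q-2Y_1Q^2=0$, that is,
\[
Y_1Q(I-Q)=0,
\]
which is the claim.

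The only step that is not bookkeeping is passing from $[Q,Y_1^2]=0$ to $[Q,Y_1]=0$. This is exactly where positivity of the imaginary part is needed: without it, $Y_1$ need not be a function of $Y_1^2$. Once that step is made, the result follows immediately.
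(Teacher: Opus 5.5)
Your proposal is correct and follows the paper's proof. Theorem~\ref{theo2} gives $QY_1^2=Y_1^2Q$; positivity of $Y_1$ upgrades this to $QY_1=Y_1Q$ via the square root; and substituting into the $\mathcal{S}_T$ relation, with $QX_1=X_1Q$, yields $Y_1Q(I-Q)=0$. Your separate treatment of $Q\in\{0,I\}$, which Theorem~\ref{theo2} formally excludes, is a small bit of extra care that the paper leaves implicit.
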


\begin{proof}
Note that Theorem \ref{theo2} implies $QY_1^2=Y_1^2Q$. Since $Y_1\geq 0$, then $QY_1=Y_1Q$. Hence $Q\in\mathcal{S}_T$ implies $Y_1Q(I-Q)=0$, as claimed.
\end{proof}

\begin{corollary}\label{correducinghyponormal}
Let $T\in\mathcal{L}(\mathcal{H})$ be hyponormal and let $Q\in\mathcal{S}_T$ be a self-adjoint operator different from $0$ and $I$ such that $QX_1=X_1Q$. Then either $T$ is normal or $Q(I-Q)$ is not a quasiaffinity. In particular, $T$ has  nontrivial closed reducing subspaces. Moreover, $Q$ is a projection provided $[T^{*},T]$ is injective.
\end{corollary}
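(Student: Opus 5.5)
The plan is to combine Lemma \ref{lemma1} with positivity of the self-commutator. Put $C=[T^{*},T]\geq 0$. By Lemma \ref{lemma1}, $Q\in\mathcal{S}_{X_1+iC}$. Since $Q$ commutes with $X_1$, the definition of $\mathcal{S}_{X_1+iC}$ reduces to the identity
\[
QC+CQ-2QCQ=0 .
\]
This is the analogue, for the operator $X_1+iC$ whose imaginary part $C$ is non-negative, of the relation used in Corollary \ref{correducingpositiveimaginarypart}. I would therefore apply Corollary \ref{correducingpositiveimaginarypart} directly to $T'=X_1+iC$, with real part $X_1$ and imaginary part $C\geq 0$. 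Its hypotheses hold: $Q\in\mathcal{S}_{T'}$ is self-adjoint and commutes with $X_1$. The conclusion is $CQ(I-Q)=0$, and along the way $QC=CQ$.

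The dichotomy follows from this. Suppose $Q(I-Q)$ is a quasiaffinity. It is self-adjoint, so injectivity already gives dense range, and that is the only property needed. Then $CQ(I-Q)=0$ forces $C=0$ on a dense set, so $C=0$ and $T$ is normal. Hence either $T$ is normal or $Q(I-Q)$ is not a quasiaffinity. Likewise, if $C$ is injective, then $Q(I-Q)=0$, so $Q$ is a projection.

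For the reducing subspaces there are two cases.

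\emph{If $T$ is normal,} it has nontrivial reducing subspaces from its spectral measure, unless it is a scalar, in which case every subspace reduces it.

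\emph{If $T$ is not normal,} then $C\neq 0$. Since $C$ commutes with $Q$, the subspace $M=\ker Q(I-Q)$ reduces $Q$ and contains $\overline{\ran C}\neq\{0\}$. I would show that $M$ reduces $T$, or else extract a spectral subspace of $Q$ that does. On $M$ the restriction $Q|_M$ is a projection $P$ with $(I-P)TP$ self-adjoint. The main obstacle is controlling $T$ between $M$ and $M^{\perp}$.

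For this I would use the identity $TQ=QT^{*}+2iQY_1Q$ together with the commutation of $Q$ with $X_1$. These give that $Q$ commutes with $Y_1^2$ (Theorem \ref{theo2}), hence with $|Y_1|$, and then with $Y_1$ on the relevant spectral pieces. The aim is to conclude that the spectral projections of $Q$ commute with both $X_1$ and $Y_1$, and therefore with $T$. Any spectral projection of the non-scalar self-adjoint operator $Q$ other than $0$ and $I$ then gives a nontrivial reducing subspace.

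If $Q$ is a non-trivial projection, the subspace $\ran Q$ works directly. Indeed $(I-Q)TQ$ is then self-adjoint and maps $\ran Q$ into $\ran(I-Q)$, which forces it to vanish. Together with $QX_1=X_1Q$ this gives $QTQ^{\perp}=0$ as well.
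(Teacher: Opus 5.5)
Your derivation of $[T^{*},T]Q(I-Q)=0$ is correct and matches the paper: Lemma \ref{lemma1}, then Corollary \ref{correducingpositiveimaginarypart} applied to $X_1+i[T^{*},T]$. The dichotomy, the projection claim, and your treatment of the cases where $T$ is normal or $Q$ is a projection are also fine.

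\textbf{The gap.} It is the remaining case, $T$ non-normal and $Q$ not a projection. There you leave open whether $\ker Q(I-Q)$ reduces $T$. Instead you propose to show that the spectral projections of $Q$ commute with $Y_1$, via $QY_1^2=Y_1^2Q$ and hence $Q|Y_1|=|Y_1|Q$. But commutation with $|Y_1|$ says nothing about $Y_1$ when $Y_1$ is not positive, and the claim is false. Take $T=S\oplus iY$ on $\ell^2\oplus\C^2$, with $S$ the unilateral shift, $Y=\begin{pmatrix}0&1\\1&0\end{pmatrix}$ and $Q=0\oplus\operatorname{diag}(2,2/3)$. Then:
\begin{itemize}
\item $T$ is hyponormal and not normal;
\item $QX_1=X_1Q=0$;
\item $(I-Q)TQ=0\oplus\frac{2}{3}\begin{pmatrix}0&-i\\ i&0\end{pmatrix}$ is self-adjoint, so $Q\in\mathcal{S}_T$.
\end{itemize}
Yet the spectral projection of $Q$ for the eigenvalue $2$ does not reduce $T$, since $T(0\oplus e_1)=0\oplus ie_2$.

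\textbf{The missing step.} It is the one you already used for projections. Self-adjointness of $(I-Q)TQ$ means $(I-Q)TQ=QT^{*}(I-Q)$. Hence $\ker Q$ is $T^{*}$-invariant and $\ker(I-Q)$ is $T$-invariant. Both kernels are $X_1$-invariant because $QX_1=X_1Q$. So each is $Y_1$-invariant, and therefore each reduces $T$.

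If $T$ is not normal, then $\ker Q\oplus\ker(I-Q)=\ker Q(I-Q)\supseteq\overline{\ran[T^{*},T]}\neq\{0\}$. So one of the two kernels is nonzero, and both are proper since $Q\neq 0,I$. This is the paper's argument: it takes $\ker Q(I-Q)=\ker Q\oplus\ker(I-Q)$ as the reducing subspace. Only the eigenvalues $0$ and $1$ of $Q$ matter; no other spectral subspace of $Q$ needs to reduce $T$.
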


\begin{proof}
By Lemma \ref{lemma1} and Corollary \ref{correducingpositiveimaginarypart}  we have
$$[T^{*},T]Q(I-Q)=0.$$
Consequently, if $T$ is not normal, the kernel $\ker Q(I-Q)$ is a nontrivial closed reducing subspace for $T$. Indeed, since $Q$ commutes with $X_1$, $\ker Q(I-Q)$ is invariant under $X_1$. Since $\ker Q(I-Q)=\ker Q\oplus \ker (I-Q)$ and both $\ker Q$ is invariant under $T^{*}$ and $\ker (I-Q)$ is invariant under $T$ yields the result and the statement of the Corollary \ref{correducinghyponormal} follows.
\end{proof}

Likewise, it is possible to deduce the existence of reducing subspaces for those operators with a hyponormal even power: 	
	
\begin{corollary}\label{corollary fin}
Let $T\in\EL(\HH)\setminus\C I$ and assume there exists some $m\in\N$ such that $T^{2^m}$ is hyponormal. If there exists some $Q\in\mathcal{S}_T$ self-adjoint with $QX_1=X_1Q$, then either $T^{2^m}$ is normal or $Q(I-Q)$ is not a quasiaffinity. In particular, $T$ has  nontrivial closed reducing subspaces.  Moreover, $Q$ is a projection provided $[T^{*2^m},T^{2^m}]$ is injective.
\end{corollary}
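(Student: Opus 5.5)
The plan is to reduce to Corollary \ref{correducinghyponormal}, applied to the hyponormal operator $W=T^{2^m}$. To do so we need a self-adjoint $Q\in\mathcal{S}_W$ that commutes with the real part of $W$. Since $Q\in\mathcal{S}_T$ is self-adjoint, \cite[Corollary 6.3]{clemgallard} (already used in the proof of Theorem \ref{theo2}) gives $Q\in\mathcal{S}_{T^n}$ for every $n\in\N$, and in particular $Q\in\mathcal{S}_{T^{2^m}}$. The remaining point is to show $QX_{2^m}=X_{2^m}Q$, where $X_{2^m}$ is the real part of $T^{2^m}$.

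We prove this by induction on $k$, showing that $Q$ commutes with the real part $X_{2^k}$ and with $Y_{2^k}^2$ for $0\le k\le m$. The base case $k=0$ is the hypothesis $QX_1=X_1Q$ together with Theorem \ref{theo2}, which gives $QY_1^2=Y_1^2Q$.

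For the inductive step, suppose $Q\in\mathcal{S}_{T^{2^k}}$ commutes with $X_{2^k}$. Theorem \ref{theo2} applied to $T^{2^k}$ then gives $QY_{2^k}^2=Y_{2^k}^2Q$. The condition $Q\in\mathcal{S}_{T^{2^k}}$ with $QX_{2^k}=X_{2^k}Q$ reads $QY_{2^k}+Y_{2^k}Q-2QY_{2^k}Q=0$, so $Q\in\mathcal{S}_{iY_{2^k}}$. Now compute
$$T^{2^{k+1}}=(X_{2^k}+iY_{2^k})^2=X_{2^k}^2-Y_{2^k}^2+i(X_{2^k}Y_{2^k}+Y_{2^k}X_{2^k}),$$
so that $X_{2^{k+1}}=X_{2^k}^2-Y_{2^k}^2$. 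Each term commutes with $Q$, hence $Q$ commutes with $X_{2^{k+1}}$. Combined with $Q\in\mathcal{S}_{T^{2^{k+1}}}$, this closes the induction. This step is the heart of the argument, and it is exactly why the exponent must be a power of $2$: squaring is what keeps the real part expressible through the quantities $X^2$ and $Y^2$ that we already control.

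With $W=T^{2^m}$ hyponormal and $Q\in\mathcal{S}_W$ self-adjoint commuting with $\operatorname{Re}W$, Corollary \ref{correducinghyponormal} gives the dichotomy: either $W$ is normal, or $[W^*,W]Q(I-Q)=0$ with $Q(I-Q)$ not a quasiaffinity. If $[W^*,W]$ is injective, then $Q(I-Q)=0$, so $Q$ is a projection.

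It remains to obtain reducing subspaces for $T$ itself rather than for $W$. Since $Q\in\mathcal{S}_T$ is self-adjoint, $\ker(I-Q)$ is invariant under $T$ and $\ker Q$ is invariant under $T^*$. Also $Q$ commutes with $X_1$, so $\ker Q(I-Q)=\ker Q\oplus\ker(I-Q)$ is invariant under $X_1$, exactly as in Corollary \ref{correducinghyponormal}. We then argue as in that corollary, and expect this to be the main obstacle:
\begin{itemize}
\item If $Q(I-Q)$ is not a quasiaffinity, then $\ker Q(I-Q)$ is a nonzero closed subspace reducing $X_1$. We then check that $Y_1$ also leaves it invariant, using $Q\in\mathcal{S}_{iY_1}$: on $\ker Q(I-Q)$ one has $Q$ acting as a projection, and the identity $QY_1+Y_1Q=2QY_1Q$ forces $Y_1$ to preserve $\ker Q\oplus\ker(I-Q)$. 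If this subspace is all of $\HH$, then $Q$ itself is a projection different from $0$ and $I$ commuting with $X_1$ and $Y_1$, and $Q(\HH)$ reduces $T$.
\item If instead $W$ is normal, then $Q$ commutes with $Y_{2^k}^2$ and $X_{2^k}$ at each stage. We would try to use $T\notin\C I$ together with the spectral projections of $Q$ (as Theorem \ref{theo2} does) to get a nontrivial reducing subspace. Concretely, $Q\neq 0,I$ commutes with $X_1$ and $Y_1^2$, and $\mathcal{S}_{iY_1}$ pins $Y_1$ on the eigenspaces of $Q$.
\end{itemize}
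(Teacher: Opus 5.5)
Your argument for the dichotomy and for the projection statement is the paper's own proof, with the iteration written out. The paper likewise uses \cite[Corollary 6.3]{clemgallard} to get $Q\in\mathcal{S}_{T^{2^k}}$. It uses Theorem \ref{theo2} together with $X_{2^{k+1}}=X_{2^k}^2-Y_{2^k}^2$ to pass $QX_{2^k}=X_{2^k}Q$ up to $k=m$, and then applies Corollary \ref{correducinghyponormal} to $W=T^{2^m}$. These parts are correct, and $[W^*,W]Q(I-Q)=0$ gives that $Q$ is a projection when $[W^*,W]$ is injective.

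The genuine gap is the clause ``$T$ has nontrivial closed reducing subspaces'', which you rightly flag. Your first bullet is correct, and more careful than the paper's one-line proof. Since $Q\in\mathcal{S}_{iY_1}$, both $\ker Q$ and $\ker(I-Q)$ are invariant under $X_1$ and $Y_1$. Hence each of them reduces $T$ itself, not just $W$. So when $Q(I-Q)$ is not a quasiaffinity and $Q\neq 0,I$, one of them is a nontrivial reducing subspace of $T$.

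Your second bullet, however, is not an argument, and the mechanism it suggests cannot work. For $Qx=qx$ and $Qy=py$, the identity $QY_1+Y_1Q=2QY_1Q$ gives $(p+q-2pq)\langle Y_1x,y\rangle=0$. So $Y_1$ carries the $q$-eigenspace of $Q$ into the $q/(2q-1)$-eigenspace rather than preserving it. A concrete example on $\C^2\otimes\HH$:
\begin{itemize}
\item take $T=\left(\begin{smallmatrix}1&i\\ i&-1\end{smallmatrix}\right)\otimes I$ and $Q=\operatorname{diag}(2,\tfrac23)\otimes I$;
\item then $Q\in\mathcal{S}_T$, $QX_1=X_1Q$, $T^2=0$, and $Q(I-Q)$ is invertible;
\item yet no nontrivial spectral projection of $Q$ reduces $T$.
\end{itemize}

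What is missing is to use the normality of $W$ directly. If $W$ is normal and not scalar, then $T$ commutes with $W$, so by Fuglede's theorem $T$ commutes with the spectral projections of $W$. These projections therefore reduce $T$, and at least one is nontrivial. If $W$ is scalar, as in the example above, a separate argument is needed. Neither your sketch nor the paper supplies one: the paper only invokes Corollary \ref{correducinghyponormal}, which yields reducing subspaces of $T^{2^m}$ rather than of $T$.

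Note also that the statement needs the hypothesis $Q\neq 0,I$, as in Corollary \ref{correducinghyponormal}, for this clause to make sense. You use it tacitly.
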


\begin{proof}
By means of the fact that $X_2=X_1^2-Y_1^2$, Theorem \ref{theo2} yields that $QX_2=X_2Q$ whenever $Q\in\mathcal{S}_T$ is self-adjoint and $QX_1=X_1Q$. Now, \cite[Corollary 6.3]{clemgallard} ensures that $Q\in \mathcal{S}_{T^{2^m}}$. Hence, $QX_{2^m}=X_{2^m}Q$ and upon applying Corollary \ref{correducinghyponormal}, the result follows.
\end{proof}
	
With the previous results in hand, we may state the characterization of the existence of reducing subspaces for hyponormal operators by means of the class $\mathcal{S}_T$ as follows:

\begin{theorem}
Let $T\in\EL(\HH)$ be a hyponormal operator with $T=X_1+iY_1$. Then $T$ has a nontrivial reducing subspace if and only if there exists a self-adjoint operator $Q\in\mathcal{S}_T$ different from  $0$ and $I$ such that $QX_1=X_1Q$.
\end{theorem}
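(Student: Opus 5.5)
The two directions are of very different depth. The implication from $Q$ to a reducing subspace is essentially Corollary \ref{correducinghyponormal}. The converse is a direct construction using an orthogonal projection.

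\emph{Sufficiency.} Assume $Q\in\mathcal{S}_T$ is self-adjoint, different from $0$ and $I$, and $QX_1=X_1Q$. We split according to whether $T$ is normal.

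If $T$ is not normal, Corollary \ref{correducinghyponormal} already gives a nontrivial closed reducing subspace, namely $\ker Q(I-Q)=\ker Q\oplus\ker(I-Q)$. I would double-check that this subspace is genuinely nontrivial. It is nonzero because the range of $[T^{*},T]\neq 0$ meets it, given the identity $[T^{*},T]Q(I-Q)=0$ obtained from Lemma \ref{lemma1} and Corollary \ref{correducingpositiveimaginarypart}. I still need to see why it is not the whole space.

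If $T$ is normal, $X_1$ and $Y_1$ commute and we look for a spectral subspace. If $X_1$ is not a scalar, any nontrivial spectral projection of $X_1$ commutes with $X_1$ and with $Y_1$, hence reduces $T$. If $X_1=aI$ is scalar, then $T=aI+iY_1$ is normal with $Y_1$ self-adjoint. Any spectral projection of $Y_1$ then reduces $T$, and one exists unless $Y_1$ is scalar. Since $\dim\HH=\infty$, even $T\in\C I$ is reducible. So in the normal case the conclusion holds without using $Q$ at all.

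The main obstacle is the nontriviality issue in the non-normal case: ensuring $\ker Q(I-Q)\neq\HH$, i.e.\ that $Q$ is not a projection. If $Q$ is a projection, I would argue as follows. Since $Q\in\mathcal{S}_T$, we get $(I-Q)TQ=0$, so $Q\HH$ is invariant under $T$. Because $Q$ commutes with $X_1$ we have $(I-Q)X_1Q=0$, hence $(I-Q)Y_1Q=0$. Therefore $Q$ commutes with $Y_1$ as well, and $Q\HH$ reduces $T$. This subspace is nontrivial because $Q\neq 0,I$. If $Q$ is not a projection, $\ker Q(I-Q)\neq\HH$, and together with the nonzero argument above this settles the non-normal case.

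\emph{Necessity.} Let $M$ be a nontrivial reducing subspace and take $Q=P_M$. Then $(I-P_M)TP_M=0$, which is self-adjoint, so $Q\in\mathcal{S}_T$. Moreover $P_M$ commutes with $T$ and $T^{*}$, hence with $X_1$, and $Q\neq 0,I$ because $M$ is nontrivial.
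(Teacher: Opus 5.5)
Your proof is correct and takes the same route the paper intends: sufficiency comes from Corollary \ref{correducinghyponormal} (normal operators are trivially reducible), and necessity comes from taking $Q=P_M$ for a nontrivial reducing subspace $M$. Your separate treatment of the case where $Q$ is an orthogonal projection, where $\ker Q(I-Q)=\HH$ and you use $Q\HH$ instead, is a worthwhile addition, because the paper's proof of that corollary asserts that $\ker Q(I-Q)$ is nontrivial without addressing this case.
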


\section*{Acknowledgements}

This work was initiated during a research stay of the authors in the Department of Mathematics at the University of Iowa. The authors would like to express their sincere gratitude to Prof. Ra\'ul Curto and the department for its warm hospitality and stimulating research environment, which greatly contributed to the development of this work. Both authors are partially supported by Plan Nacional I+D Grant No. PID2022-137294NB-I00, Spain, the Spanish Ministry of Science and Innovation, through the ''Severo Ochoa Proggrame for Centres of Excellence in R\& D'' (CEX2019-000904-S), and from the Spanish National Research Council, through the ''Ayuda extraordinaria a Centros de Excelencia Severo Ochoa'' (20205CEX001). The second author also acknowledges support of the Grant PRE2022-101849 funded by: MICIU/AEI/10.13039/501100011033 and FSE+.


\end{document}